\documentclass{article}

\usepackage[utf8]{inputenc} % Can't be put after biblatex

\author{
    Jean \textsc{Vereecke}%
    \thanks{Univ. Grenoble Alpes, CNRS, IF, 38000 Grenoble, France.\hfill\href{mailto:jean.vereecke@univ-grenoble-alpes.fr}{\texttt{jean.vereecke@univ-grenoble-alpes.fr}}}
}
\title{Uncountably many extremal Series--Sinai states for the Ising model on Lobachevsky lattices}

\makeatletter
\renewcommand\subparagraph{
    \@startsection{subparagraph}{5}{\z@}{3.25ex \@plus1ex \@minus .2ex}{-1em}{\normalfont\normalsize\bfseries}
}
\makeatother

\newcommand{\metric}[3]{\ifthenelse{\isempty{#2}}{#1}{#1\!\left(#2,#3\right)}}
\newcommand{\distance}[2]{\metric{\ud}{#1}{#2}}
\newcommand{\hausdorffDistance}[2]{\metric{\ud_\mathrm{H}}{#1}{#2}}
\newcommand{\hyperbolicDistance}[2]{\metric{\ud_\mathrm{hyp}}{#1}{#2}}
\newcommand{\abs}[1]{\left\lvert#1\right\rvert} % absolute value
\newcommand{\bfi}{\mathbf{i}} % imaginary unit
\newcommand{\cardinality}[1]{\left\lvert#1\right\rvert}
\newcommand{\exG}{\mathrm{ex}\mathcal{G}} % extremal Gibbs measures
\newcommand{\fLpq}[2]{\mathcal{L}_{#1,#2}}
\newcommand{\Hyp}[1]{{\mathbb{H}_{#1} }} % hyperbolic space
\newcommand{\ICpq}{\mathrm{IC}_{p,q}} % Lpq isoperimetric constant
\newcommand{\imPart}[1]{\mathrm{Im}\left(#1\right)} % imaginary part
\newcommand{\Lpq}{\fLpq{p}{q}} % Lpq
\newcommand{\origin}{\mathbf{o}} % reference vertex
\newcommand{\rePart}[1]{\mathrm{Re}\left(#1\right)} % real part
\let\setminus\backslash
\newcommand{\ud}{\mathrm{d}} % d vertical
\newcommand{\colonSet}[2]{\left\{#1:\,#2\right\}}
\newcommand{\commaSet}[2]{\left\{#1,\,#2\right\}}
\newcommand{\eg}{\emph{e.g.}}
\newcommand{\ie}{\emph{i.e.}}
\newcommand{\textiff}{\emph{if and only if}}
\newcommand{\resp}{resp.}

\usepackage{amsfonts}
\usepackage{amsmath}

\DeclareMathOperator*{\arcosh}{arcosh}

\DeclareMathOperator*{\artanh}{artanh}

\usepackage{amssymb}
\usepackage{amsthm}

\newtheoremstyle{special}{5pt}{5pt}{\itshape}{}{\bfseries}{.}{5pt}{\thmname{#1}~\thmnumber{#2}{\mdseries\textsc{\thmnote{ (#3)}}}}
\theoremstyle{special}
\newtheorem{generic}{Generic}
\newtheorem{conjecture}[generic]{Conjecture}

\newtheorem{definition}[generic]{Definition}
\newtheorem{lemma}[generic]{Lemma}
\newtheorem{proposition}[generic]{Proposition}
\newtheorem{remark}[generic]{Remark}
\newtheorem{theorem}[generic]{Theorem}
\usepackage[
    backend=biber,
    defernumbers=true
]{biblatex}

\usepackage[
    a4paper,
    margin=2.5cm
]{geometry}

\usepackage{hyperref}
\usepackage{overpic}
\usepackage{parskip}
\usepackage{pict2e} % Polygon in overpic
\usepackage{pgffor}
\usepackage{stmaryrd} % Package for llbracket
\usepackage{subcaption}
\usepackage{svg} % Import for .svg files

\usepackage{tikz}
\usetikzlibrary{decorations.pathreplacing}

\usepackage{wrapfig}
\usepackage{xifthen}
\usepackage{xcolor}

\definecolor{darkgreen}{RGB}{0, 138, 13}

\begin{document}

\maketitle

\begin{abstract}
    \noindent
    We construct an uncountable family of extremal Gibbs states of the low temperature Ising model on hyperbolic lattices embedded in the hyperbolic plane \(\Hyp{2}\) whose interfaces are complete geodesics of \(\Hyp{2}\).
    These states are extracted from the states constructed by D'Achille, Coquille and Le Ny in~\cite{dAClN25} by considering path in the dual lattice at close enough distance from geodesics of \(\Hyp{2}\) thanks to the Morse--Mostow lemma.

    \bigskip

    \footnotesize
    \noindent\emph{AMS 2000 subject classification}:
    Primary: 60K35~;
    Secondary: 82B20 \& 30F45.
    \noindent\emph{Keywords and phrases}:
    Ising model,
    extremal Gibbs states,
    hyperbolic lattices,
    Lobachevsky plane.
\end{abstract}

\section{Introduction}

Lund, Rasetti and Regge~\cite{LRR77} studied the ferromagnetic nearest-neighbour Ising and dimer models on lattices embedded in the Lobachevsky plane, which are invariant under the action of discrete subgroups of its isometries.
Since then, statistical mechanics models defined on hyperbolic lattices have been of interest in the physics litterature, see \eg~\cite{RNO92} or~\cite{BPR20}.

In the Euclidean case, results due to Aizenmann~\cite{Aiz80} and Higuchi~\cite{Hig79} state that on \(\mathbb{Z}^2\) any Gibbs measures of the nearest-neighbour Ising model can be expressed as a convex combination of the pure phases \(\mu^+\) and \(\mu^-\).
On \(\mathbb Z^3\), the result of Dobrushin~\cite{Dob73} exhibits a countable family of extremal non-translation invariant states at low temperature, with a localised interface at a given height~\cite{Dob74}.
For \(d \ge{3}\), the set of extremal Ising states is conjectured to be at most countable but this is difficult to prove.

Concerning the Ising model on a \(d\)-regular tree, for \(d \ge{3}\), Higuchi in~\cite{Hig77} and Bleher and Ganikhodjaev in~\cite{BH13} proved the existence of uncountably many extremal non tree-automorphism invariant ``interface states'' at low temperature.
Rozikov and Rakhmatullaev~\cite{RR08} exhibited the so-called ``weakly periodic'' Gibbs measures, corresponding to subgroups in the representation group of the Bethe lattice.
In Gandolfo--Ruiz--Shlosman~\cite{GRS12}, extremal inhomogeneous Gibbs measures at low temperature were exhibited, based upon perturbations of ground-states, that is configurations with sparse enough set of disagreement edges.
Coquille, Külske and Le~Ny~\cite{CKlN1} generalised the statements of~\cite{GRS14} to a large family of ferromagnetic finite-spin models and \(p\)-SOS models.

In hyperbolic geometry, Series and Sinai~\cite{SS90} exhibited an uncountable family of mutually singular Gibbs measures for the Ising model on the vertices of the Cayley graph of any finitely generated co-compact group of isometries of \(\Hyp{2}\), such as the hyperbolic tesselations, indexed by the geodesics of \(\Hyp{2}\), thus proving that the set of extremal measures is uncountable at low temperature.
Those states are analogous to Dobrushin states \cite{Dob73}, \ie~weak limits of finite volume states with \(\pm\) boundary condition, and we will call these states \emph{Series--Sinai states} in the rest of the paper.
The authors conjectured these states to be extremal Gibbs measures for the Ising model at sufficiently low temperature.
Later on, Gandolfo, Ruiz and Shlosman~\cite{GRS14} constructed an uncountable family of inhomogeneous Gibbs measures called the \emph{Millefeuilles}, having infinitely many interfaces with positive density.
More recently, D'Achille, Coquille and Le~Ny~\cite{dAClN25} provided a sufficient condition for extremality of Gibbs measures on any non-amenable, connected, transitive and locally finite graph.
In the case of hyperbolic lattices, they identified a family of extremal inhomogeneous Gibbs measures, indexed by a suitable collection of discrete geodesics of the dual lattice, under a sparsity condition on the disagreement edges.

The aim of the present paper is to extract uncountably many Series--Sinai states from the latter family, which gives a partial answer to Series--Sinai \cite{SS90}'s conjecture, described in section \ref{sec:results}.

\section{Results}
\label{sec:results}

Let \(p,\,q\ge{3}\) such that \(1/p+1/q<1/2\).
Let \(\Lpq\) be the tiling of the hyperbolic plane \(\Hyp{2}\) composed of regular \(p\)-gons such that each vertex of any \(p\)-gon has degree \(q\), see Figure \ref{fig:Lpq} for a representation of \(\Lpq\) in the Poincaré disk model for \((p,q) = (5,5)\) and \((p, q) = (6, 5)\).
By abuse of notation, we will also denote by \(\Lpq\) the underlying graph \((V_{p, q}, E_{p, q})\) given by the vertices of the \(p\)-gons and the edges between neighbouring vertices.
Let us denote by \(\ICpq\) the isoperimetric constant of \(\Lpq\) given by
\begin{equation}
    \ICpq
    =
    \inf_{
        \substack{
            \Lambda \Subset \Lpq \\
            \Lambda \neq \emptyset
        }
    }
    \frac{
        \cardinality{\partial_e \Lambda}
    }{
        \cardinality{\Lambda}
    },
\end{equation}
where
\(
    \partial_e\Lambda
    =
    \colonSet{
        e\in E
    }{
        \cardinality{
            e \cap \Lambda
        }
        =
        1
    }
\).
Häggstrom, Jonasson and Lyons computed in~\cite{HJL01} the value of \(\ICpq\) and remarked in~\cite[Remark 4.13]{HJL01} that the value was not attained by the balls of \(\Lpq\) given by the graph distance.
D'Achille, Jacquier and Ruszel proved in~\cite[Theorem 1.1]{dAJR25} that the bound was attained thanks to a layer decomposition due to Rietman--Nienhuis--Oitmaa and Moran.

\begin{figure}[ht]
    {
        \hfill
        \begin{subfigure}[l]{5cm}
            \centering
            \includegraphics[width=\textwidth]{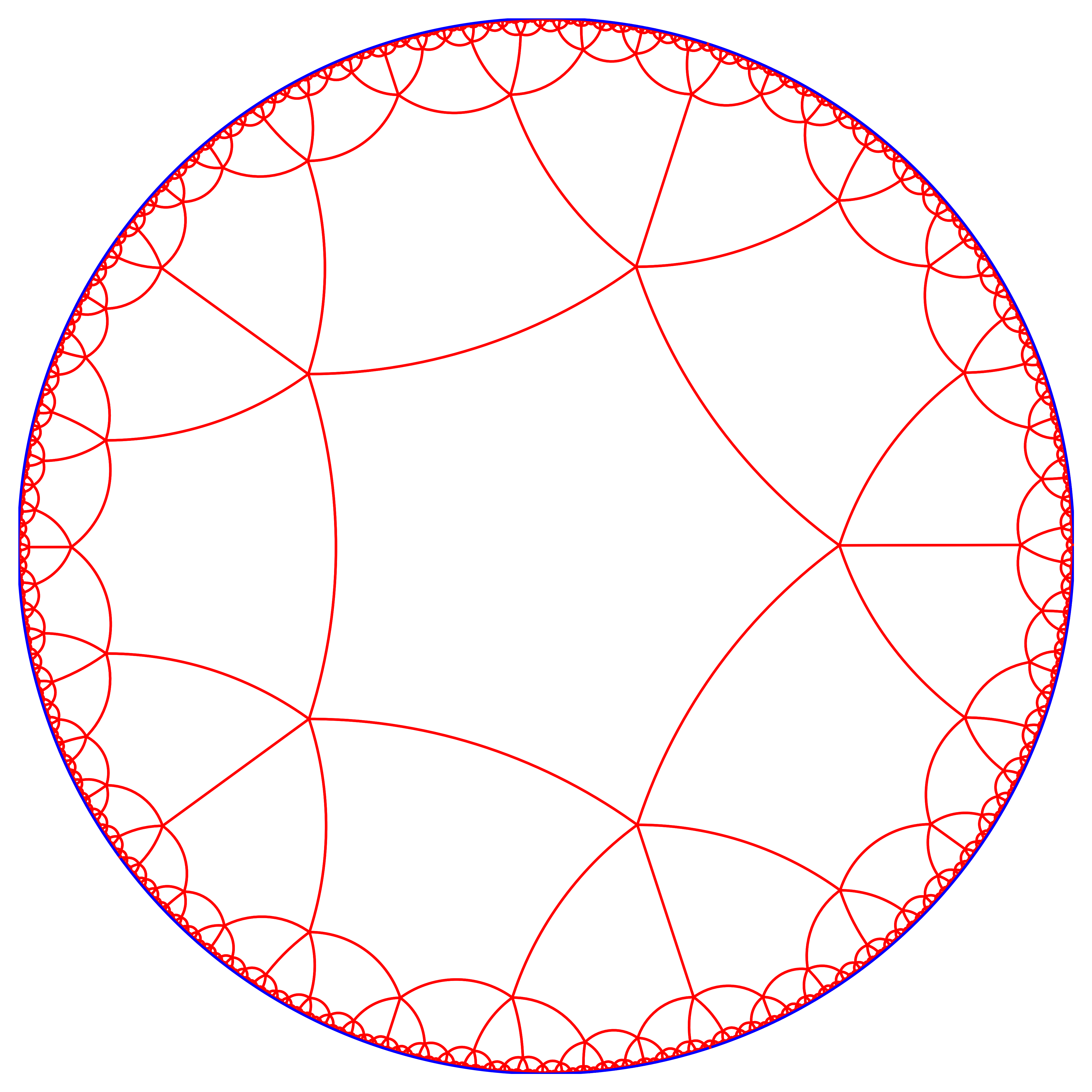}
            \subcaption{For \(p=5\) and \(q=5\).}
        \end{subfigure}
        \hfill
        \begin{subfigure}[r]{5cm}
            \centering
            \includegraphics[width=\textwidth]{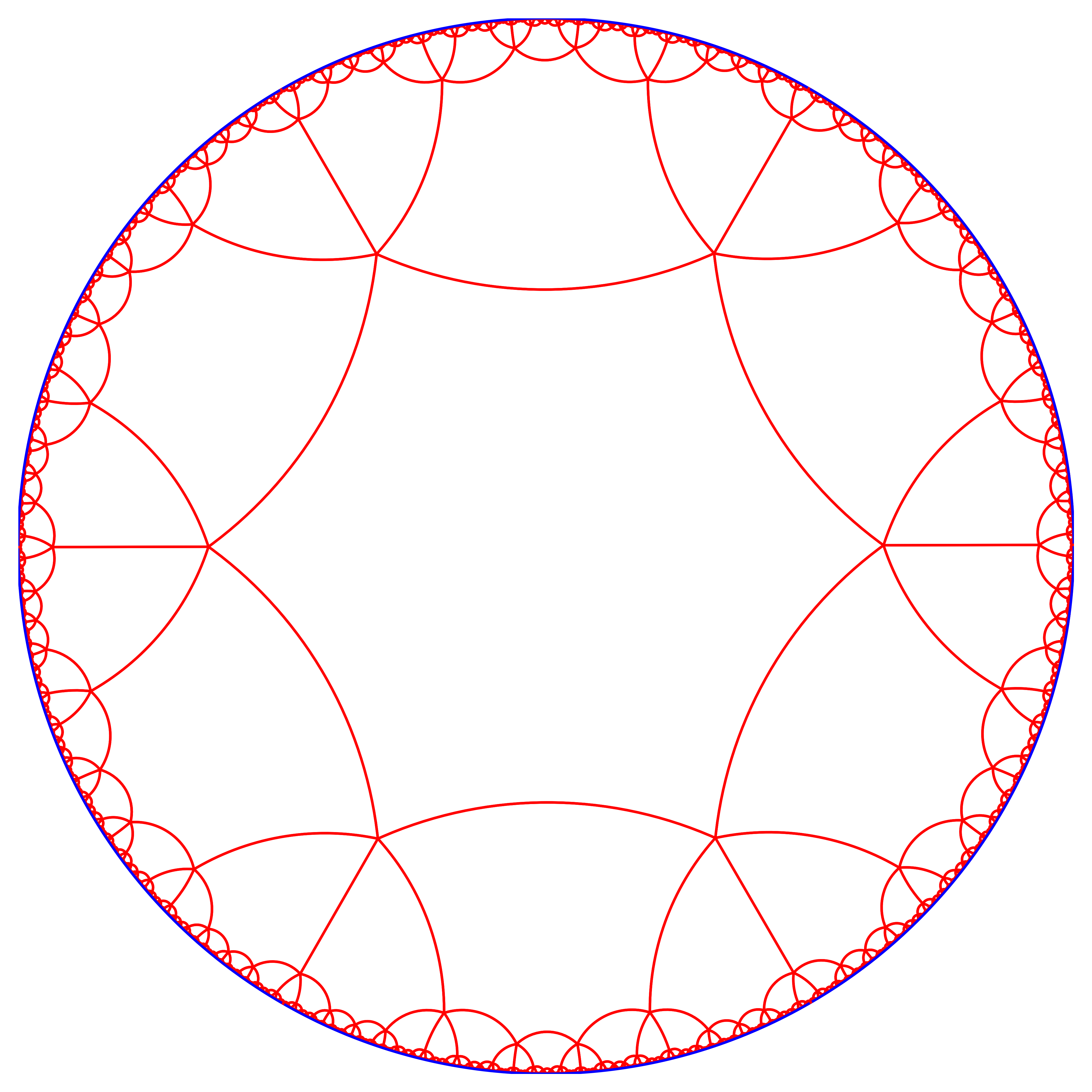}
            \subcaption{For \(p=6\) and \(q=5\).}
        \end{subfigure}
        \hfill
    }
    \caption{Two representations of \(\Lpq\) in the conformal disk model of \(\mathbb{H}_2\).}
    \label{fig:Lpq}
\end{figure}

Let \(G = (V, E)\) be a \emph{locally finite}, at most countable, \emph{connected} graph and let \(\Omega = \{-1, +1\}^V\) be the set of spin configurations.
For any \(\Lambda \Subset V\) finite subset of \(V\), we define
\begin{equation}
    H_\Lambda : \Omega\ni\omega \mapsto - \sum_{\{i, j\}\in E_\Lambda^e} \omega_i \omega_j \in\mathbb R,
\end{equation}
where \(E_\Lambda^e = \colonSet{e\in E}{e\cap\Lambda\neq\emptyset}\).
The Ising model on \(\Lambda\) at inverse temperature \(\beta\in(0, \infty)\) with boundary condition \(\eta\in\Omega\) is the probability measure \(\mu^\eta_{\Lambda;\beta}\) on the set of configurations
\begin{equation}
    \Omega_\Lambda^\eta
    =
    \colonSet{
        \omega\in\Omega
    }{
        \forall v\in V\setminus\Lambda,
        \,
        \omega_v = \eta_v
    }
\end{equation}
such that for all \(\omega\in\Omega^\eta_\Lambda\),
\begin{equation}
    \mu^\eta_{\Lambda;\beta}(\omega)
    =
    \frac{1}{Z^\eta_{\Lambda;\beta}}
    \exp\left(
        - \beta H_\Lambda(\omega)
    \right),
\end{equation}
where
\(
    \displaystyle
    Z^\eta_{\Lambda;\beta}
    =
    \sum_{\omega\in\Omega_\Lambda^\eta}
    \exp\left(
        - \beta H_\Lambda(\omega)
    \right)
\)
is the \emph{normalisation constant}, also called the \emph{partition function}.
The \emph{set of infinite volume Gibbs measures \(\mathcal G_\beta\) of the Ising model on \(G\) at inverse temperature \(\beta\)} is the set of all probability measures \(\mu\) on \(\Omega\) such that the \emph{DLR equations} hold, \ie~
\begin{equation}
\mathcal{G}_{\beta}
=
\colonSet{
    \mu \in \mathcal{M}_1(\Omega)
}{
    \forall \Lambda \Subset V,
    \,
    \mu
    =
    \int_\Omega \mu^\eta_{\Lambda;\beta} \, \ud \mu(\eta)
},
\end{equation}
where \(\mathcal{M}_1(\Omega)\) is the set of probability measures on \(\Omega\).
The set \(\mathcal G_\beta\) is a \emph{Choquet simplex}~\cite{Geo11}, and as such if we denote by \(\exG_\beta\) the set of its extremal elements, all Gibbs measures can be uniquely described as convex combinations of elements of \(\exG_\beta\).

Let \(f\) be an oriented simple curve of \(\Hyp{2}\) separating \(\Lpq\) into the set of vertices to the left of or on \(f\) and the set of vertices to the right of \(f\).
We denote by \(\eta^{f}\) the configuration such that for all \(v\in V_{p, q}\), \(\eta^{f}_v=+1\) if \(v\) is to the left of or on \(f\), and \(\eta^{f}_v=-1\) if \(v\) is to the right of \(f\).
One can wonder for which classes of \(f\) the weak limits \(\displaystyle\mu^{f}_{\beta} = \lim_{\Lambda\uparrow V} \mu^{\eta^{f}}_{\Lambda; \beta}\) exist, and what are their properties.

\begin{figure}[ht]
    {
        \hfill
        \begin{subfigure}[l]{5cm}
            \centering
            \begin{overpic}[
                percent=true,
                width=\textwidth
            ]{L_5_5_5.png}
                \put (85,85) {{\small \(+\)}}
                \put (95,73) {{\small \(+\)}}
                \put (100,59) {{\small \(+\)}}
                \put (100,45) {{\small \(+\)}}
                \put (97,31) {{\small \(+\)}}
                \put (88,17) {{\small \(+\)}}
                \put (79,8) {{\small \(+\)}}
                \put (71,2) {{\small \(+\)}}
                \put (57,-2) {{\small \(+\)}}
                \put (44,-2.5) {{\small \(+\)}}
                \put (18,5) {{\small \(+\)}}
                \put (5,17) {{\small \(+\)}}
                \put (0,30) {{\small \(+\)}}
                \put (31,0) {{\small \(+\)}}
                \put (-2,60) {{\small \(-\)}}
                \put (1,70) {{\small \(-\)}}
                \put (6,80) {{\small \(-\)}}
                \put (14,90) {{\small \(-\)}}
                \put (25,97) {{\small \(-\)}}
                \put (36,100) {{\small \(-\)}}
                \put (48,101) {{\small \(-\)}}
                \put (60,100) {{\small \(-\)}}

                \put (1,50){\tikz\draw[black,thick] (-1.5,0) arc (-90:-27.5:4);}
                \put (73,96) {\(\gamma\)}

                \put (49,49){\tiny\(\bullet\)}

                \put (52,50) {\(\origin\)}
            \end{overpic}
        \end{subfigure}
        \hfill
        \begin{subfigure}[r]{5cm}
            \centering
            \begin{overpic}[width=\textwidth]{L_5_5_5.png}
                \put (65,98) {{\small \(-\)}}
                \put (75,93) {{\small \(-\)}}
                \put (95,73) {{\small \(+\)}}
                \put (100,59) {{\small \(+\)}}
                \put (100,45) {{\small \(+\)}}
                \put (97,31) {{\small \(+\)}}
                \put (88,17) {{\small \(+\)}}
                \put (79,8) {{\small \(+\)}}
                \put (71,2) {{\small \(+\)}}
                \put (57,-2) {{\small \(+\)}}
                \put (44,-2.5) {{\small \(+\)}}
                \put (18,5) {{\small \(-\)}}
                \put (5,17) {{\small \(-\)}}
                \put (0,30) {{\small \(-\)}}
                \put (31,0) {{\small \(-\)}}
                \put (-2,60) {{\small \(-\)}}
                \put (1,70) {{\small \(-\)}}
                \put (6,80) {{\small \(-\)}}
                \put (14,90) {{\small \(-\)}}
                \put (25,97) {{\small \(-\)}}
                \put (36,100) {{\small \(-\)}}
                \put (48,101) {{\small \(-\)}}
                \put (-3,44) {{\small \(-\)}}

                \put (85,85) {\color{darkgreen}\(\widehat{\gamma}\)}
                \put(0,0){
                    \color{darkgreen}
                    \polyline(40,3)(39,7)(38.5,22)(47,51)(72,67.5)(80,80)(81.5,83.5)
                }
                \put (-1.3,-1){
                    \color{darkgreen}
                    \put (39,7){\tiny\(\bullet\)}
                    \put (38.5,22){\tiny\(\bullet\)}
                    \put (72,67.5){\tiny\(\bullet\)}
                    \put (80,80){\tiny\(\bullet\)}
                }

                \put (52,50) {\(\origin\)}
                \put (48.4,49.9){\tiny\(\bullet\)}
            \end{overpic}
        \end{subfigure}
        \hfill
    }
    \caption{
        Left: a Series--Sinai state for the geodesic \(\gamma\) on \(\fLpq{5}{5}\).
        Right: a zigzag state for the path \(\widehat\gamma\) on \(\fLpq{5}{5}\).
    }
    \label{fig:states}
\end{figure}

Series and Sinai~\cite{SS90} studied the class \(\Gamma\) of geodesics of \(\Hyp{2}\).
They proved that for \(p,\,q\ge{3}\) such that \(1/p+1/q<1/2\) and for any geodesic \(\gamma\in\Gamma\), if we consider \(\mu^{\gamma}_{\beta}\) a possibly subsequential weak limit of finite volume states with boundary condition \(\eta^\gamma\), then for every \(\gamma_1\) and \(\gamma_2\) distinct elements of \(\Gamma\), the measures \(\mu_\beta^{\gamma_1}\) and \(\mu_\beta^{\gamma_2}\) are mutually singular at low temperature, which gives that \(\exG_\beta\) is uncountable at low temperature.
We will call \emph{Series--Sinai states} the following family of probability measures
\begin{equation}
    \mathrm{SS}_\beta(p, q)
    =
    \commaSet{
        \mu_\beta^{\gamma}
    }{
        \gamma \in \Gamma
    }.
\end{equation}
They stated the following conjecture.

\begin{conjecture}
[Extremality of all Series--Sinai states, \cite{SS90}]
\label{conj:ss_conjecture}
    Let \(p,\,q\ge{3}\) such that \(1/p+1/q<1/2\).
    For \(\beta \in (0, \infty)\) large enough,
    \begin{equation}
        \mathrm{SS}_{\beta}(p,q)
        \subset
        \exG_\beta.
    \end{equation}
\end{conjecture}

For \(p \ge{5}\) and \(q \ge{3}\) with \(1/p+1/q<1/2\), D'Achille, Coquille and Le~Ny~\cite{dAClN25} studied the set \(\widehat\Gamma(p,q)\) of complete paths \(\widehat\gamma\) in the dual lattice with the property that whenever \(\widehat\gamma\) crosses an edge of \(\Lpq\), it cannot cross the adjacent edges of the lattice see Figure \ref{fig:states} for an example and paragraph \ref{par:hyp_til} for a formal definition.
We will call \emph{zigzag paths} the paths of \(\widehat\Gamma(p,q)\).
Thanks to an \emph{excess energy estimate}, they proved that \(\mu_\beta^{\widehat\gamma}\) is well-defined and extremal at low temperature.
We will call \emph{zigzag states} the following family of probability measures
\begin{equation}
    \mathrm{Zigzag}_\beta(p, q)
    =
    \commaSet{
        \mu_\beta^{\widehat\gamma}
    }{
        \widehat\gamma \in \widehat\Gamma(p,q)
    }.
\end{equation}

The two classes \(\mathrm{SS}_{\beta}(p,q)\) and \(\mathrm{Zigzag}_\beta(p, q)\) can be linked to one another, through the following lemma.

% \nopagebreak[5]
\begin{lemma}[Extremality of Series--Sinai states via dual lattice proximity]
\label{lem:link_SS_DACLN}
    Let \(p,\,q\ge{3}\) with \(1/p+1/q<1/2\).
    Let \(\gamma\in\Gamma\), if there exists \(\widehat\gamma\in\widehat\Gamma(p, q)\) such that the Hausdorff distance between \(\gamma\) and \(\widehat\gamma\) is smaller than half the side length of a \(p\)-gon of \(\Lpq\), then the configurations associated to \(\gamma\) and \(\widehat\gamma\) are the same and \(\mu^\gamma_\beta=\mu^{\widehat{\gamma}}_\beta\).
    In particular, the Series--Sinai state associated to \(\gamma\) is an extremal state at low enough temperature.
\end{lemma}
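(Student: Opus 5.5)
The plan is to show that \(\eta^\gamma\) and \(\eta^{\widehat\gamma}\) coincide vertex by vertex. Once this holds, the finite volume measures \(\mu^{\eta^\gamma}_{\Lambda;\beta}\) and \(\mu^{\eta^{\widehat\gamma}}_{\Lambda;\beta}\) are identical for every \(\Lambda\Subset V_{p,q}\), so their weak limits agree. By the result of~\cite{dAClN25} recalled above, the limit \(\mu^{\widehat\gamma}_\beta\) exists and is extremal at low temperature. Hence \(\mu^\gamma_\beta=\mu^{\widehat\gamma}_\beta\in\exG_\beta\), and the statement also holds for any subsequential limit defining \(\mu^\gamma_\beta\).

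Let \(\ell\) denote the side length of a \(p\)-gon and suppose \(\hausdorffDistance{\gamma}{\widehat\gamma}<\ell/2\). Orient \(\gamma\) and \(\widehat\gamma\) consistently, so that both have the same endpoints on \(\partial\Hyp{2}\); this is possible because bounded Hausdorff distance forces equal endpoints at infinity. The key geometric step is to show that no vertex \(v\) lies on \(\gamma\), and that no vertex lies in the region \(R\) between \(\gamma\) and \(\widehat\gamma\). This suffices, because then every vertex lies on the same side of both curves, since their union separates \(\Hyp{2}\) into pieces whose sides agree outside \(R\).

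To rule out vertices in \(R\), I would use the zigzag structure of \(\widehat\gamma\). This curve is a concatenation of dual edges, each joining centres of adjacent \(p\)-gons through the midpoint of a primal edge, and consecutive crossed primal edges are non-adjacent. A vertex \(v\) lies at distance at least \(\ell/2\) from every point of the dual edges. Along a crossed edge this distance is realised at its midpoint, and elsewhere it is even larger by convexity of the regular polygon; the non-adjacency condition is what ensures \(\widehat\gamma\) never passes close to a corner. So every vertex satisfies \(\distance{v}{\widehat\gamma}\ge\ell/2\).

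Now suppose a vertex \(v\) lies on \(\gamma\) or in \(R\). I would find a point \(x\in\gamma\) with \(\distance{v}{x}\) small, and then a point \(y\in\widehat\gamma\) with \(\distance{x}{y}<\ell/2\), deriving a contradiction from \(\distance{v}{y}\ge\ell/2\). If \(v\in\gamma\), take \(x=v\), and the Hausdorff bound directly gives \(\distance{v}{\widehat\gamma}<\ell/2\), a contradiction. If \(v\in R\) strictly, \(v\) lies in some \(p\)-gon face \(F\). The portion of \(\widehat\gamma\) inside \(F\) is a pair of segments from midpoints of two non-adjacent sides to the centre. One then checks that the side of this broken line containing \(v\) is cut off from \(\gamma\) only if \(\gamma\) passes within \(\ell/2\) of a vertex on that side. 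This contradicts \(\gamma\) staying \(\ell/2\)-close to the broken line at a corner, by the triangle inequality in the triangle formed by the vertex, an edge midpoint and the centre.

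This last local case analysis inside a single face is the main obstacle. I expect it requires the convexity of hyperbolic distance to a geodesic, namely that the distance function to \(\gamma\) is convex along the dual segments. From that one can argue that if \(\gamma\) stays within \(\ell/2\) of both dual segments, it cannot swing around a corner vertex of \(F\).
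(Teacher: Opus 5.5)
\textbf{There is a genuine gap in the geometric step.} The paper gives no written proof of this lemma beyond the illustrating figure. Your outer reduction is the intended one: identical boundary conditions give identical finite-volume measures, hence identical (subsequential) limits, and extremality is imported from D'Achille, Coquille and Le~Ny. The gap is that the geometric step, which is the whole content of the lemma, is never actually proved.

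Your triangle-inequality scheme cannot close. From \(x\in\gamma\) and \(y\in\widehat\gamma\) with \(\mathrm{d}(x,y)<\ell_{p,q}/2\) you only get \(\mathrm{d}(v,y)<\mathrm{d}(v,x)+\ell_{p,q}/2\). This contradicts \(\mathrm{d}(v,y)\ge\ell_{p,q}/2\) only when \(v\in\gamma\).

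Your fallback face-by-face analysis is not well posed. A vertex never lies in the interior of a \(p\)-gon. Nothing forces \(\widehat\gamma\) to enter a face incident to \(v\). A component of the region \(R\) between the curves can stretch over many faces, so no check near a single corner can exclude \(v\in R\). You leave this step open yourself, and convexity of \(\mathrm{d}(\cdot,\gamma)\) along dual segments does not by itself locate a vertex relative to \(\gamma\).

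The bound \(\mathrm{d}(v,\widehat\gamma)\ge\ell_{p,q}/2\) also has nothing to do with non-adjacency. The vertex \(v\) is the centre of a face of the dual tiling \(\mathcal{L}_{q,p}\). That face is a regular \(q\)-gon of inradius \(\ell_{p,q}/2\) whose interior no dual edge enters. So the bound holds for every vertex and every dual path, including vertices of faces never visited by \(\widehat\gamma\), which your convexity argument inside one polygon does not reach.

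\textbf{The missing idea is a global sweep.} You need to show that every point between the two curves lies within distance \(<\ell_{p,q}/2\) of \(\widehat\gamma\). Parametrise \(\widehat\gamma\) by \(f:\mathbb{R}\to\Hyp{2}\), and let \(P\) be the nearest-point projection onto \(\gamma\). Let \(H(t,s)\) be the point at fraction \(s\) along \([f(t),P(f(t))]\). Then for every vertex \(v\),
\(\mathrm{d}(H(t,s),f(t))\le\mathrm{d}(f(t),\gamma)<\ell_{p,q}/2\le\mathrm{d}(v,f(t))\),
so \(H\) never meets \(V_{p,q}\).

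Because \(H\) has bounded displacement, it extends continuously to \(\overline{\mathbb{D}}\), fixing the two common endpoints at infinity. Meanwhile \(P\circ f\) runs along \(\gamma\) between those endpoints, possibly non-monotonically. Close \(f\), \(P\circ f\) and \(\gamma\) with the same arc of \(\partial\mathbb{D}\). Their winding numbers around each vertex \(v\) then coincide; for \(P\circ f\) and \(\gamma\) this follows from a homotopy inside \(\gamma\), which avoids \(v\). Hence \(v\) lies on the same side of \(\widehat\gamma\) as of \(\gamma\), which gives \(\eta^{\gamma}=\eta^{\widehat\gamma}\). The rest of your argument then goes through.
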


\begin{figure}[ht]
    \centering
    \begin{overpic}[width=5cm]{L_5_5_5.png}
        \put (65,98) {{\small \(-\)}}
        \put (75,93) {{\small \(-\)}}
        \put (95,73) {{\small \(+\)}}
        \put (100,59) {{\small \(+\)}}
        \put (100,45) {{\small \(+\)}}
        \put (97,31) {{\small \(+\)}}
        \put (88,17) {{\small \(+\)}}
        \put (79,8) {{\small \(+\)}}
        \put (71,2) {{\small \(+\)}}
        \put (57,-2) {{\small \(+\)}}
        \put (18,5) {{\small \(-\)}}
        \put (5,17) {{\small \(-\)}}
        \put (0,30) {{\small \(-\)}}
        \put (31,0) {{\small \(-\)}}
        \put (-2,60) {{\small \(-\)}}
        \put (1,70) {{\small \(-\)}}
        \put (6,80) {{\small \(-\)}}
        \put (14,90) {{\small \(-\)}}
        \put (25,97) {{\small \(-\)}}
        \put (36,100) {{\small \(-\)}}
        \put (48,101) {{\small \(-\)}}
        \put (-3,44) {{\small \(-\)}}

        \put (85,85) {\color{darkgreen}\(\widehat{\gamma}\)}
        \put(0,0){
            \color{darkgreen}
            \polyline(40,3)(39,7)(38.5,22)(47,51)(72,67.5)(80,80)(81.5,83.5)
        }
        \put (-1.3,-1){
            \color{darkgreen}
            \put (39,7){\tiny\(\bullet\)}
            \put (38.5,22){\tiny\(\bullet\)}
            \put (72,67.5){\tiny\(\bullet\)}
            \put (80,80){\tiny\(\bullet\)}
        }
        
        \put (36.5,1){\tikz\draw[black,thick] (0,0) arc (190:115:3.85);}
        \put (40,-3) {\(\gamma\)}

        \put (52,50) {\(\origin\)}
        \put (48.4,49.9){\tiny\(\bullet\)}
    \end{overpic}
    \caption{Illustration of Lemma \ref{lem:link_SS_DACLN}: when a geodesic is at small enough distance of a zigzag path, they both define the same partition of \(V_{p,q}\).}
\end{figure}

Motivated by Lemma \ref{lem:link_SS_DACLN}, the main result we obtain is the following.

\begin{theorem}[Extremal Series--Sinai states]
\label{thm.principal}
For all \(p\ge{5}\), there exists \(q_p\ge{3}\) such that for all \(q \ge{q_p}\), there exist uncountably many \emph{extremal} Series--Sinai states for the ferromagnetic nearest-neighbour Ising model on \(\Lpq\) at low enough temperature.
\end{theorem}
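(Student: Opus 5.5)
By Lemma~\ref{lem:link_SS_DACLN}, it is enough to produce uncountably many geodesics \(\gamma\in\Gamma\), each lying within Hausdorff distance less than half the side length \(\ell_{p,q}\) of a \(p\)-gon from some zigzag path \(\widehat\gamma\in\widehat\Gamma(p,q)\). For each such \(\gamma\), the Series--Sinai state \(\mu^\gamma_\beta\) coincides with the zigzag state \(\mu^{\widehat\gamma}_\beta\), which is extremal at low temperature by \cite{dAClN25}. Distinct geodesics give distinct states by Series--Sinai, since their states are mutually singular. The plan therefore runs in the opposite direction from the lemma: we first build zigzag paths, then recover geodesics from them.

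The construction uses the Morse--Mostow lemma. In \(\Hyp{2}\), every \((\lambda,c)\)-quasi-geodesic lies within Hausdorff distance \(R(\lambda,c)\) of a unique complete geodesic. Conversely, the bi-infinite sequences of zigzag moves give uncountably many zigzag paths, for instance by choosing at each \(p\)-gon one of two admissible exits. We need the following two facts.
\begin{itemize}
\item[(i)] Every zigzag path, parametrised by hyperbolic arc length, is a \((\lambda,c)\)-quasi-geodesic, with constants that we control in terms of \(p\) and \(q\).
\item[(ii)] For \(q\ge q_p\), the resulting Morse constant \(R\) is strictly smaller than \(\ell_{p,q}/2\).
\end{itemize}
Given these, each zigzag path \(\widehat\gamma\) determines a geodesic \(\gamma\) at distance less than \(\ell_{p,q}/2\). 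Two different zigzag paths separate \(V_{p,q}\) differently, so by Lemma~\ref{lem:link_SS_DACLN} they yield different geodesics. Uncountably many zigzag paths therefore produce uncountably many geodesics, which proves the theorem.

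For (i), I would use explicit hyperbolic trigonometry of \(\Lpq\). The side length satisfies \(\cosh(\ell_{p,q}/2)=\cos(\pi/q)/\sin(\pi/p)\), so \(\ell_{p,q}\to\infty\) as \(q\to\infty\) with \(p\) fixed. The zigzag condition forbids crossing adjacent edges, so consecutive crossed edges of a \(p\)-gon are separated by at least one edge. Since \(p\ge5\), this forces the turning angles between consecutive dual segments to be bounded away from \(\pi\), uniformly as \(q\) grows. As \(q\to\infty\), the \(p\)-gons become close to ideal polygons and the dual segments become long, with length comparable to \(\ell_{p,q}\). A piecewise-geodesic path made of long segments meeting at angles bounded below is a local quasi-geodesic with \(\lambda\) close to \(1\). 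Its deviation from the geodesic is then bounded by a constant depending only on the angle bound, for example via the classical estimate for broken geodesics whose segments have length \(L\gg\log(1/\text{angle})\).

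The main obstacle is (ii): the generic Morse--Mostow constant is far too weak. I would avoid it by proving directly that a broken geodesic with long segments and turning angles at least \(\theta_0\) stays within distance \(C(\theta_0)\) of its geodesic, with \(C\) independent of the segment length. A first route is the thin-triangle argument: \(\log(\sinh)\) estimates in right-angled triangles bound the distance from each corner to the geodesic by about \(\artanh\) of a fixed quantity. A second route is to use the pentagon or triangle structure of the tiling. Comparing this bounded \(C(\theta_0)\) with \(\ell_{p,q}/2\to\infty\) yields the threshold \(q_p\). The delicate point is to check that the uniform angle bound really holds for the specific dual paths in \(\widehat\Gamma(p,q)\), and that it is this bound, rather than the segment length, which controls \(C\).
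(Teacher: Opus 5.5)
Your architecture matches the paper's: zigzag paths are uniform quasi-geodesics, Morse--Mostow gives a geodesic at bounded distance, that distance beats \(\ell_{p,q}/2\) for large \(q\), and Lemma~\ref{lem:link_SS_DACLN} plus mutual singularity finish. The gap is in the geometry you feed into (i) and (ii).

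\textbf{The dual segments are not long.} A zigzag path joins centres of adjacent faces, so its segments have length \(\ell_{q,p}\), the edge length of the dual tiling. It satisfies \(\cosh(\ell_{q,p}/2)=\cos(\pi/q)/\sin(\pi/p)\), which is the formula you attributed to \(\ell_{p,q}\); the correct one is \(\cosh(\ell_{p,q}/2)=\cos(\pi/p)/\sin(\pi/q)\). As \(q\to\infty\), \(\ell_{q,p}\to 2\arcosh(1/\sin(\pi/p))<\infty\): a near-ideal \(p\)-gon has bounded inradius, and only its sides blow up. So your claim that the dual segments have length comparable to \(\ell_{p,q}\) is false. Both arguments built on it then fail: the local-to-global estimate for broken geodesics with \(L\gg\log(1/\text{angle})\) in (i), and the length-independent bound \(C(\theta_0)\) in (ii).

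\textbf{What (i) actually needs.} With segments of bounded length, a uniform lower bound on the interior angles does not give quasi-geodesicity. The cycle of \(q\) faces around a vertex has interior angle \(2\pi/p\) at every centre, uniformly in \(q\), yet it closes up. You need a quantitative condition coupling turning angle and step length. For constant turning \(a\) and step \(\ell\), the path is an orbit of an isometry that is hyperbolic if and only if \(\cosh(\ell/2)\cos(a/2)>1\). You then also need a comparison showing that admissible paths travel at least as far as this constant-turning orbit.

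\textbf{Your main obstacle then disappears.} Since \(\ell_{q,p}\) and the angles converge to fixed values as \(q\to\infty\), the quasi-geodesic constants stay bounded in \(q\). Hence the generic Morse--Mostow bound \(Ck^2(c+\delta)\) is \(O_q(1)\), while \(\ell_{p,q}/2\sim\log q\); no sharpened Morse constant is needed. This is what the paper does:
\begin{itemize}
\item It restricts to \(\widehat\Gamma_{\mathrm{red}}(p,q)\), where every turning is \(\pm a_p\). This still leaves two choices per step, hence uncountably many paths.
\item It applies the discrete hyperbolic Schur theorem (Theorem~\ref{lem:schur}) to compare with the constant-turning path. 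That path is an orbit of a hyperbolic isometry as soon as \(\tan(\pi/p)<\cos(\pi/q)\).
\item It reads off \(k_{q,p}=\ell_{q,p}/\log\rho\) and \(c_{q,p}=2\ell_{q,p}\).
\end{itemize}
If you insist on all of \(\widehat\Gamma(p,q)\), you must also handle turnings that vary along the path, up to \(\pi-4\pi/p\). The paper's two-choice subclass with constant \(\abs{\theta_i}\) avoids this, because the comparison path stays a single isometry orbit.
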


% \paragraph*{Goal of the paper.}

To prove Theorem \ref{thm.principal}, we compute a bound on the remoteness between a selected path of \(\widehat{\Gamma}(p, q)\) and the geodesic path joining its endpoints.
More precisely, we consider \(\widehat{\gamma}\in\widehat{\Gamma}(p, q)\) such that the angles between two consecutive geodesic segments is small enough, and bound the Hausdorff distance between \(\widehat{\gamma}\) and the geodesic \(\gamma\) joining its endpoints.

\section{Tools}

\paragraph{Hausdorff distance, quasi-geodesics and \(\delta\)-hyperbolic metric spaces.}

Let \((X,\distance{}{})\) be a \emph{proper geodesic} metric space.
Let \(A\) and \(B\) be two non-empty subsets of \(X\).
The \emph{Hausdorff distance \(\hausdorffDistance{A}{B}\) between \(A\) and \(B\)} is defined by
\begin{equation}
    \hausdorffDistance{A}{B}
    =
    \max\left(
        \sup_{a\in A} \distance{a}{B},\;
        \sup_{b\in B} \distance{b}{A}
    \right)
\end{equation}
where \(\displaystyle\distance{a}{B} = \inf_{b\in B} \distance{a}{b}\) is the \emph{distance of the point \(a\) to the set \(B\)}.
For any pair of points \((x,y) \in X^2\), we denote by \([x,y]\) the \emph{geodesic segment} joining \(x\) and \(y\).
For every finite sequence of points \(x_1,\,\ldots,\,x_n \in X\), we will denote by \([x_1,\ldots,x_n]\) or by \([x]\) the union of all the geodesic segments \([x_i,x_{i+1}]\).
Given this notation, given three points \(x,\,y,\,z\in X\), the \emph{geodesic triangle} \(\triangle(x,y,z)\) is the union of the three geodesics \([x,\,y]\), \([y,z]\) and \([z,x]\), \ie~\(\triangle(x, y, z) = [x, y, z, x]\).

\begin{definition}[\(\delta\)-hyperbolic metric spaces,~\cite{BH13}]
    Let \(\delta > 0\).
    A geodesic triangle in a metric space is said to be \emph{\(\delta\)-thin} \textiff{} each of its sides is contained in the \(\delta\)-neighbourhood of the union of the other two sides, \ie~every point on a side of the triangle is at distance at most \(\delta\) from the union of the other two sides.
    A geodesic metric space \(X\) is said to be \emph{\(\delta\)-hyperbolic} \textiff{} every geodesic triangle in \(X\) is \(\delta\)-thin.
    Moreover, if \(X\) is \(\delta\)-hyperbolic for some \(\delta > 0\), then \(X\) is said to be \emph{hyperbolic}.
\end{definition}

\begin{definition}[Quasi-geodesics]
    Let \(k \ge 1\) and \(c \ge 0\) and let \(I\) be a \emph{closed} interval of \(\mathbb{R}\).
    A \emph{\((k, c)\)-quasigeodesic curve} \(f\) of \(X\) is an application \(f:I\to X\) such that for all \(s,\,t \in I\), we have
    \begin{equation}
        \frac{1}{k} \abs{t-s} - c
        \le 
        \distance{f(s)}{f(t)}
        \le 
        k \abs{t-s} + c.
    \end{equation}
    Moreover, if \(I\) is \emph{bounded} (\resp~\(I = \mathbb{R}\)), we say that \(f\) is a \emph{\((k, c)\)-quasigeodesic segment} (\resp~\emph{complete \((k, c)\)-quasi-geodesic}).
\end{definition}

The main point behind the notion of quasi-geodesic is that any quasigeodesic curve in a hyperbolic space is close to a geodesic curve, \ie~we can bound the Hausdorff distance between them by a constant that only depends on the constants of quasi-geodicity of the curve and not on its length.
In particular, the following lemma holds for all \((k, c)\)-quasigeodesic curves, and especially for complete \((k, c)\)-quasi-geodesics.

\begin{lemma}[Morse--Mostow lemma]
\label{lem:Morse--Mostow}
    Let us assume that \(X\) is \emph{\(\delta\)-hyperbolic}.
    Then, there exists a constant \(C > 0\) such that for all \(k\ge 1\) and \(c\ge 0\) and all \((k, c)\)-quasigeodesic segment \(f:[0, t]\to \Hyp{2}\), we have
    \begin{equation}
        \hausdorffDistance{f([0, t])}{[f(0), f(t)]} \le C k^2 (c + \delta).
    \end{equation}
\end{lemma}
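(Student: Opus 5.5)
The statement to prove is the Morse--Mostow lemma itself: a quasigeodesic segment in a \(\delta\)-hyperbolic space stays within Hausdorff distance \(Ck^2(c+\delta)\) of the geodesic joining its endpoints. I would follow the standard argument of Bridson--Haefliger (III.H.1.7) in three stages. First, tame the quasigeodesic. Then show the geodesic \([f(0),f(t)]\) lies in a controlled neighbourhood of the curve, via the logarithmic divergence of paths in hyperbolic spaces. Finally, obtain the reverse inclusion by a connectedness argument.

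\textbf{Taming.} Replace \(f\) by a continuous piecewise-geodesic curve \(f'\). Sample \(f\) at the endpoints and at points of \(I\) spaced \(\asymp c\) apart (or spaced \(1\) apart when \(c\) is small, absorbing this into \(\delta\)), and join consecutive samples by geodesics. Then \(f'\) is a \((k,c')\)-quasigeodesic with \(c'\lesssim k c\). Moreover \(\hausdorffDistance{f}{f'}\lesssim k c\), and the length of \(f'\) between any two parameters \(s<s'\) is at most \(k_1\,\distance{f'(s)}{f'(s')}+c_1\), with \(k_1\lesssim k^2\) and \(c_1\lesssim kc\). This tameness estimate is the key input: it converts the quasigeodesic inequality into a length bound usable below.

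\textbf{Divergence lemma.} In a \(\delta\)-hyperbolic space, if \(p\) is a rectifiable path from \(x\) to \(y\) and \(z\in[x,y]\), then
\(\distance{z}{\mathrm{im}\,p}\le \delta\,\lvert\log_2 \ell(p)\rvert+1\).
I would prove this by bisecting \(p\) repeatedly and using \(\delta\)-thin triangles at each of the \(\log_2\ell(p)\) levels. Now let \(D=\sup_{z\in[f(0),f(t)]}\distance{z}{f'}\), attained at some \(x_0\). Take points \(y,z\) on the geodesic at distance \(2D\) on either side of \(x_0\) (or the endpoints, if these fall outside). Pick \(y',z'\) on \(f'\) within \(D\) of \(y,z\), and form the path \([y,y']\cup f'|_{[y',z']}\cup[z',z]\). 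The distance \(\distance{y'}{z'}\) is at most \(6D\), so the tameness bound gives this path length at most \(6Dk_1+c_1+2D\). Since \(x_0\) lies at distance at least \(D\) from this path, the divergence lemma yields \(D\le \delta\log_2(6Dk_1+c_1+2D)+1\). This inequality forces \(D\lesssim \delta\log(k_1)+c_1+\delta\), hence \(D\lesssim k^2(c+\delta)\) after crude bounding of the logarithm by the polynomial. This step needs the most care, but only to keep the constants explicit in the required form \(Ck^2(c+\delta)\) with \(C\) universal. I would simply be generous, using \(\log x\le x\), to reach the \(k^2\) dependence.

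\textbf{Reverse inclusion.} Suppose a maximal subsegment \([a,b]\) of \(f'\) lies outside the \(D\)-neighbourhood of the geodesic. Every point of the geodesic is within \(D\) of \(f'\), by the previous step. By connectedness of the geodesic, some point \(w\) on it is within \(D\) of both \(f'|_{[0,a]}\) and \(f'|_{[b,t]}\), say of \(f'(s_1)\) and \(f'(s_2)\) with \(s_1\le a\le b\le s_2\). Then \(\distance{f'(s_1)}{f'(s_2)}\le 2D\). Tameness bounds the length of \(f'|_{[s_1,s_2]}\) by \(2Dk_1+c_1\), so every point of that stretch lies within \(D+k_1 D+c_1\) of the geodesic. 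Adding back the \(O(kc)\) from taming gives the Hausdorff bound \(Ck^2(c+\delta)\). The constant depends only on universal quantities, since \(\delta\) appears linearly.
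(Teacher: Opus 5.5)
The paper does not prove this lemma; it quotes it. The form \(Ck^2(c+\delta)\) with a universal \(C\) is Shchur's theorem, whose proof was corrected by Gou\"ezel--Shchur (the reference \cite{GS19} given for \(C=92\)).

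Your outline is the Bridson--Haefliger argument (III.H.1.7). It does show that the Hausdorff distance is bounded by \emph{some} \(R(k,c,\delta)\), but it cannot deliver the quadratic dependence, and it breaks at your last step. The divergence inequality \(D\le\delta\log_2\bigl((6k_1+2)D+c_1\bigr)+1\) gives at best \(D\lesssim c+\delta(1+\log k)\). The logarithm is genuine: in \(\Hyp{2}\), a half-circle of radius \(r\) centred at the midpoint \(m\) of \([x,y]\), run at unit speed, is a \((k,0)\)-quasigeodesic with \(k=\pi\sinh r/(2r)\), while \(m\) lies at distance \(r\asymp\log k\) from it.

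Your reverse inclusion then multiplies this \(D\) by \(k_1\asymp k^2\). In general \(k_1\) cannot be of smaller order: in \(\Hyp{2}\), a comb of thin hairpin excursions is, up to constants, a tamed \((k,0)\)-quasigeodesic whose length is about \(k^2\) times its displacement. So the method yields \(k^2(c+\delta\log k)\) at best, and with your ``generous'' bound \(D\lesssim k^2(c+\delta)\) only \(k^4(c+\delta)\). Neither is \(\le Ck^2(c+\delta)\) for all \(k\ge 1\), even if \(C\) is allowed to depend on \(\delta\). Your sentence ``adding back the \(O(kc)\) from taming gives \(Ck^2(c+\delta)\)'' is therefore unjustified. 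Removing the \(\log k\) requires a finer argument that controls the excursions of the quasigeodesic away from the geodesic directly, rather than chaining the logarithmic divergence with the length distortion; that is the substance of the Shchur and Gou\"ezel--Shchur proof.

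There are also two bookkeeping slips. First, with sampling spacing \(\asymp c\), the additive constant of the length bound is of order \(k_1c'\asymp k^3c\), not \(\lesssim kc\); spacing \(\asymp(c+\delta)/k\) gives \(c'\lesssim c+\delta\) and \(c_1\lesssim k^2(c+\delta)\). Second, the \(+1\) in the divergence lemma cannot be ``absorbed into \(\delta\)'' when \(\delta<1\). Either rescale the metric so that \(\delta=1\) (the statement is homogeneous in \((c,\delta)\)), or stop the bisection at scale \(\delta\).

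Finally, the paper uses this lemma only in Lemma~\ref{lem:good_geod}, to get a bound that is \(O_q(1)\), with \(k_{q,p}\) and \(c_{q,p}\) bounded in \(q\). For that purpose your weaker \(R(k,c,\delta)\) would suffice, but it does not prove the lemma as stated.
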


Note that in the previous lemma, the constant \(C\) is not explicited, however to the best of our knowledge, the best constant is \(C = 92\) \cite{GS19}.

\paragraph{The Poincaré disk of curvature \(-1\).}

Let us denote by \(\mathbb D = \colonSet{z\in \mathbb{C}}{|z| < 1}\) the \emph{Poincaré disk\footnote{This provides an occurrence of Stigler's law of eponymy (or even of the first Arnold principle~\cite{Arnold}), since the conformal disk model of \(\Hyp{2}\) was introduced by Beltrami in 1868, see~\cite{arcozzi}.} of curvature \(-1\)} equipped with the metric \(\hyperbolicDistance{}{}\) such that for all \(u,\,v\in \mathbb D\), we have
\begin{equation}
    \hyperbolicDistance{u}{v}
    = \log
        \frac{
            \abs{1 - u \overline v} + \abs{u - v}
        }{
            \abs{1 - u \overline v} - \abs{u - v}
        }
    = 2 \artanh \abs{\frac{u-v}{1-u\overline v}},
\end{equation}
as \(\displaystyle \artanh : (-1, 1) \ni x \mapsto \frac{1}{2} \log\frac{1+x}{1-x} \in \mathbb{R}\).
We have in particular for any \(u\in\mathbb{D}\),
\begin{equation}
    \hyperbolicDistance{0}{u}
    =
    \log \frac{1 + \abs{u}}{1 - \abs{u}}
    =
    2 \artanh \abs{u}.
\end{equation}
\(\mathbb D\) is a proper geodesic space and for \(u,\,v\in\mathbb D\), \([u, v]\) is the arc of the circle which is orthogonal to the unit circle passing through \(u\) and \(v\).
In this setting, a curve \([x_1,\ldots,x_n]\) is uniquely determined, up to a hyperbolic rotation around \(x_1\), by \(x_1\), the lengths \(\ell_i = \hyperbolicDistance{x_i}{x_{i+1}}\) and the angles \(\theta_i = \angle(x_{i-1},x_i,x_{i+1})\) oriented in the counter-clockwise direction between \([x_{i-1},x_i]\) and \([x_i,x_{i+1}]\).
Let us denote by \(\widehat\gamma(x_1, \theta, \ell)\) such a path.
\(\mathbb D\) is a \emph{\(\delta\)-hyperbolic metric space} for \(\delta = \log\left(1 + \sqrt 2\right)\) (see~\cite{And05} Exercice 5.11).

\paragraph{Hyperbolic tilings \(\Lpq\) of \(\mathbb{D}\)~\cite{GS87}.}
\label{par:hyp_til}

Let \(p,\,q \ge{ 3}\) such that \(1/p+1/q < 1/2\).
We define the hyperbolic tiling \(\Lpq = (V_{p, q}, E_{p, q}, F_{p, q})\), where \(F_{p,q}\) is the set of all faces of the tiling, as the tiling of the Poincaré disk \(\mathbb{D}\) with regular \(p\)-gons of side length \(\ell_{p, q}\) such that each vertex of the \(p\)-gons has degree \(q\).
The dual graph\footnote{The dual graph of a planar graph \(G=(V,E)\) is the graph whose vertices are faces of \(G\) and two faces are neighbours \textiff{} they share a common edge.} of \(\Lpq\) is isomorphic to \(\mathcal{L}_{q, p}\).

\begin{wrapfigure}[11]{r}{.6\textwidth}
    \centering
    \begin{overpic}[
        height=3.5cm,
    ]{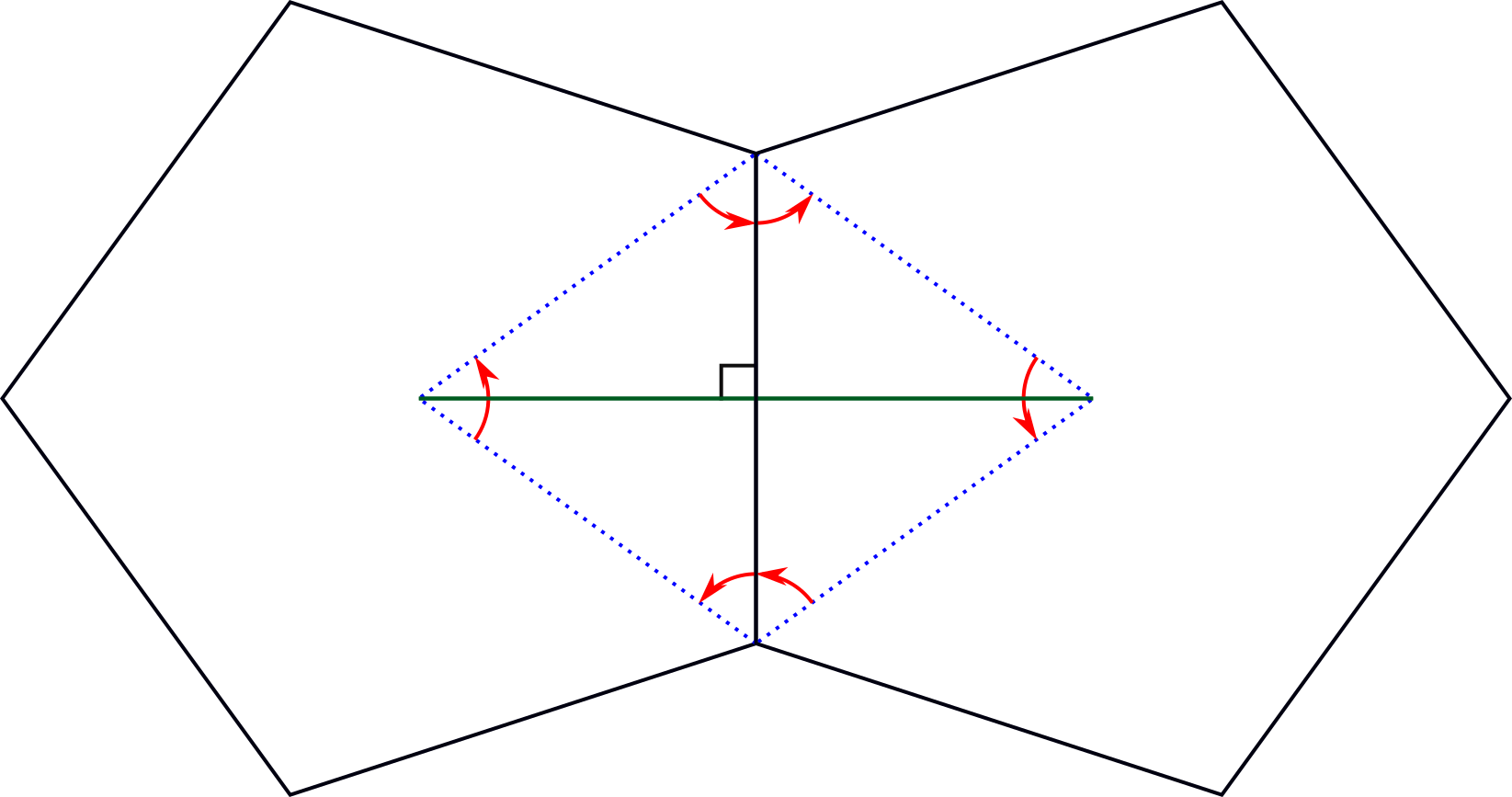}
        \put (26.5,25.5) {\tiny\(\bullet\)}
        \put (71.5,25.5) {\tiny\(\bullet\)}
        
        \put (44.5,34.5) {\color{red} \(\frac{\pi}{q}\)}
        \put (44.5,17.5) {\color{red} \(\frac{\pi}{q}\)}
        
        \put (33,25) {\color{red} \(\frac{2\pi}{p}\)}

        \put(26.8,25.5){\tikz \draw[dashed](0,0)--(0,2);}
        \put(26.8,55){\tikz \draw[<->](0,0)--(3,0);}
        \put(71.8,25.5){\tikz \draw[dashed](0,0)--(0,2);}
        \put (45,58) {\(\ell_{q,p}\)}

        \put (50,10) {\tikz \draw[dashed] (0,0)--(3.5,0);}
        \put (103,10){\tikz \draw[<->](0,0)--(0,2.1);}
        \put (50,42.3) {\tikz \draw[dashed] (0,0)--(3.5,0);}
        \put (105,26) {\(\ell_{p,q}\)}
    \end{overpic}
    \caption{Representation of part of a cell of \(\Lpq\) with \(p=5\).}
    \label{fig:cell}
\end{wrapfigure}

The length \(\ell_{p, q}\) is computed in~\cite{Coxeter}
\begin{equation}
    \ell_{p, q}
    =
    2 \arcosh\left(
        \frac{
            \cos(\pi/p)
        }{
            \sin(\pi/q)
        }
    \right),
\end{equation}
as can be seen by considering a triangle inside a cell of \(\Lpq\) as depicted in Figure \ref{fig:cell}.

Let \((f_i)_{i\in\mathbb{Z}}\in{F_{p,q}}^\mathbb{Z}\) be a collection of faces of \(\Lpq\) such that for all \(i\in\mathbb{Z}\), \(f_i\) and \(f_{i+1}\) share an edge but \(f_{i-1}\) and \(f_{i+1}\) have no common vertices.
Let \((u_i)_{i\in\mathbb{Z}}\) be the sequence of points of \(\Hyp{2}\) such that for \(i\in\mathbb{Z}\), \(u_i\) is the center of \(f_i\).
We call \emph{zigzag path} the path \(\displaystyle\widehat{\gamma}=\bigcup_{i\in\mathbb Z}[u_i,u_{i+1}]\) and we denote by \(\widehat{\Gamma}(p,q)\) the set of all zigzag paths.

\paragraph{Discrete hyperbolic Schur theorem.}

Schur's theorem allows to bound from below the distance between two endpoints of a continuous path by a constant depending only on its curvature.
We are interested in a discrete version of it.

\begin{theorem}
[Discrete hyperbolic Schur theorem~\cite{FG97}]
\label{lem:schur}
    For \(\ell > 0\), set 
    \begin{equation}
        \alpha_\ell = 2 \arctan\left(\sinh\left(\frac{\ell}{2}\right)\right).
    \end{equation}
    Let us consider \(u_0,\,\ldots,\,u_n\) and \(v_0,\,\ldots,\,v_n\) points of \(\Hyp{2}\).
    Let us suppose that for all \(i\in\llbracket 0, n-1\rrbracket\), \(\hyperbolicDistance{u_i}{u_{i+1}} = \hyperbolicDistance{v_i}{v_{i+1}} = \ell\).
    Let us suppose that for all \(i\in\llbracket 1, n-1\rrbracket\), if we denote \(\theta_i = \angle(u_{i-1}, u_i, u_{i+1})\in[-\pi, \pi]\), then \(\angle(v_{i-1}, v_i, v_{i+1}) = \abs{\theta_i}\).
    If for all \(i\in\llbracket 1, n-1\rrbracket\), \(\abs{\theta_i} \ge \pi - \alpha_\ell\), we have
    \begin{equation}
        \hyperbolicDistance{v_0}{v_n}
        \le
        \hyperbolicDistance{u_0}{u_n}.
    \end{equation}
\end{theorem}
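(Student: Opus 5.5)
The proof goes by induction on \(n\), in the spirit of the classical proof of Schur's arm lemma. It combines the hyperbolic hinge lemma (the side opposite an angle in a geodesic triangle increases with the two adjacent sides and with the angle, by the hyperbolic law of cosines) with a \emph{strengthened induction hypothesis} that also controls angles. I have not checked the key estimate in the third paragraph; it is the step most likely to need repair.

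For \(k\in\llbracket 1,n\rrbracket\), set \(\beta_k^u=\angle(u_0,u_k,u_{k-1})\) and \(\beta_k^v=\angle(v_0,v_k,v_{k-1})\), taken unoriented in \([0,\pi]\). The induction hypothesis \((H_k)\) is:
\begin{equation}
    \hyperbolicDistance{v_0}{v_k}\le\hyperbolicDistance{u_0}{u_k}
    \quad\text{and}\quad
    \beta_k^u\le\beta_k^v .
\end{equation}
\((H_1)\) is trivial since both distances equal \(\ell\) and both angles are \(0\).

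The first preliminary step concerns the \(v\)-chain. All its turns have the same sign, so it is a convex polygonal arc. I would show that the hypothesis \(\abs{\theta_i}\ge\pi-\alpha_\ell\) keeps it from winding. The quantity \(\alpha_\ell=2\arctan(\sinh(\ell/2))\) is exactly the exterior angle between consecutive chords of length \(\ell\) inscribed in a horocycle. Indeed, for a horocycle the angle between a chord of length \(\ell\) and the tangent is \(\arctan\sinh(\ell/2)\), which a direct computation in the upper half-plane confirms. Hence a chain whose exterior angles are at most \(\alpha_\ell\) is no more curved than a horocyclic chain. Comparing it with that chain, one gets that \(v_0\) and all of \(v_0,\ldots,v_n\) lie on the convex side of every line \([v_{k-1},v_k]\). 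Consequently the angle at \(v_k\) in the triangle \(\triangle(v_0,v_k,v_{k+1})\) equals \(\abs{\theta_k}-\beta_k^v\) exactly. For the \(u\)-chain, the triangle inequality for angles at \(u_k\) only gives the lower bound \(\angle(u_0,u_k,u_{k+1})\ge\abs{\theta_k}-\beta_k^u\).

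The inductive step from \((H_k)\) to \((H_{k+1})\) is the following. Using \(\beta_k^u\le\beta_k^v\), the angle at \(u_k\) facing \([u_0,u_{k+1}]\) is at least the angle at \(v_k\) facing \([v_0,v_{k+1}]\). The sides adjacent to these angles are \(\ell\) and \(\hyperbolicDistance{u_0}{u_k}\ge\hyperbolicDistance{v_0}{v_k}\). The hinge lemma then gives \(\hyperbolicDistance{v_0}{v_{k+1}}\le\hyperbolicDistance{u_0}{u_{k+1}}\). For the angle part, \(\beta_{k+1}\) is the angle at the far vertex of a triangle with one side of length \(\ell\). Writing it with the hyperbolic sine or cosine rule, one must show it decreases when the opposite chord \(\hyperbolicDistance{u_0}{u_k}\) and the included angle grow. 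This monotonicity is where the horocyclic threshold \(\alpha_\ell\) should enter a second time: for a chain more curved than a horocycle, the far angle could increase. Taking \(k=n\) then yields the theorem.

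The main obstacle is this angle comparison \(\beta_{k+1}^u\le\beta_{k+1}^v\). The plan is to first treat it as an explicit calculus statement about the function \((d,\phi)\mapsto\angle\) in a triangle with sides \(\ell\), \(d\) and included angle \(\phi\), and to check monotonicity in the admissible range \(\phi\ge\pi-\alpha_\ell-\beta\), \(d\ge\ell\). If that fails, I would fall back on a reflection argument. Starting from the \(v\)-chain, the signs of the turns are flipped one at a time, from the last differing vertex down to the first. Each flip reflects the tail of the chain across the line through the preceding edge. Such a reflection does not decrease the distance to \(v_0\) as long as \(v_0\) and the current endpoint are on the same side of that line. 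That condition would again be guaranteed by the horocyclic non-winding property from the first step, applied to the convex prefix.
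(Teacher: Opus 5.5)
The paper gives no proof of this statement; it quotes it from~\cite{FG97}. Your argument therefore has to stand on its own, and its inductive step does not close. The problem is not only that the angle estimate is unchecked: the monotonicity you plan to verify is false. In $\triangle(u_0,u_k,u_{k+1})$, set $d=\hyperbolicDistance{u_0}{u_k}$ and $\phi=\angle(u_0,u_k,u_{k+1})$. The hyperbolic cotangent four-part formula gives
\[
    \cot\beta_{k+1}
    =
    \frac{\sinh\ell\,\coth d-\cosh\ell\,\cos\phi}{\sin\phi}.
\]
So $\beta_{k+1}$ is strictly \emph{increasing} in $d$, for every $\ell$ and $\phi$: moving $u_0$ away from $u_k$ along a fixed ray opens the angle at $u_{k+1}$. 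It is decreasing in $\phi$ only where $\tanh d>\tanh\ell\cos\phi$.

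The two inequalities carried by $(H_k)$, namely $d_u\ge d_v$ and $\phi_u\ge\phi_v$, therefore push $\beta^u_{k+1}$ in opposite directions. Suppose $\beta^u_k=\beta^v_k$ with $u_0$ on the inner side at $u_k$. Then $\phi_u=\phi_v$, and any strict $d_u>d_v$ gives $\beta^u_{k+1}>\beta^v_{k+1}$. No calculus on $(d,\phi)\mapsto\beta$ can yield $(H_k)\Rightarrow(H_{k+1})$, whatever role $\alpha_\ell$ plays. You would need an extra invariant controlling this trade-off, and your outline supplies none.

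A smaller issue affects the distance half. The opposite side increases with the adjacent side $d$ only when $\tanh d\ge\tanh\ell\cos\phi$, so you also need $\hyperbolicDistance{v_0}{v_k}\ge\ell$, which you have not shown. The root cause is that your scheme cuts off the triangles $\triangle(v_0,v_k,v_{k+1})$ from the $v_0$ end. This leaves reduced chains whose first sides have \emph{different} lengths, and that is what creates the competing monotonicities.

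The reflection fallback also has a gap. Reflecting the tail across the line through the preceding edge reverses \emph{all} later turns, not just one. To reach the sign pattern of $u$ you must therefore reflect at every sign change. The condition ``$v_0$ and the current endpoint on the same side'' then concerns the endpoint of an already reflected, non-convex tail, whereas convexity of the prefix only locates $v_0$. The condition genuinely fails in the following example.
\begin{itemize}
\item Take $u$ with a single right turn, at some $j\ge2$. Going from last to first, you reflect first at $j+1$, then at $j$.
\item Choose the exterior angle at $v_j$ small and all later exterior angles equal to $\alpha_\ell$.
\item After the first reflection, the vertices from $v_j$ on lie on a horocycle whose centre is on the far side of the line through $[v_{j-1},v_j]$.
\item For $n$ large the current endpoint is then on the side opposite to $v_0$, so the second reflection strictly \emph{decreases} the distance.
\end{itemize}

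A route that works is to prove the general Cauchy arm lemma (in Steinitz's form) in $\Hyp{2}$. Consider two chains with equal corresponding side lengths, not necessarily all equal. Assume the first closes up to a convex polygon and the second has unsigned angles at least as large. The claim is that the second chain has the larger end-to-end distance; argue by minimal counterexample.
\begin{itemize}
\item If some angles agree, cut off that congruent triangle. Side lengths still match, the new angle is exact on the convex side, and it is bounded below by the triangle inequality for angles on the other side.
\item Otherwise, open the last angle of the convex chain. This increases the end distance by monotonicity in the \emph{included angle} alone. Stop when either the angles agree, or $v_n$ reaches the extension of $[v_1,v_0]$; in the latter case apply the induction to $v_1,\ldots,v_n$ together with the triangle inequality.
\end{itemize}
Your statement is the equal-angle case of this lemma. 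The threshold $\alpha_\ell$ is then used only once, to show that the $v$-chain closes up into a convex polygon. That is your first step, which is still only asserted. The horocycle nesting lemma of~\cite{FG97} invoked in Subsection~\ref{ssec:proof_main_theorem} is the natural tool for it.
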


\begin{figure}[ht]
    \centering
    \begin{overpic}[
        width=5cm
    ]{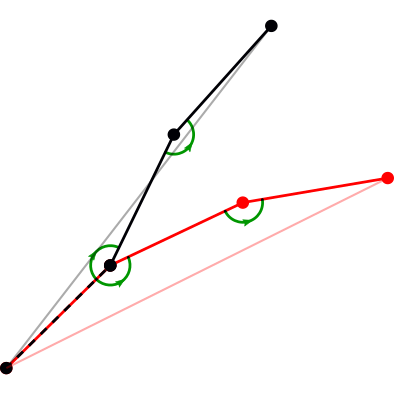}
        \put (-8, 6) {\color{black} \(u_0\)}
        \put (24, 41) {\color{black} \(u_1\)}
        \put (35, 65) {\color{black} \(u_2\)}
        \put (65, 97) {\color{black} \(u_3\)}

        \put (0, 0) {\color{red} \(v_0\)}
        \put (35, 30) {\color{red} \(v_1\)}
        \put (59, 53) {\color{red} \(v_2\)}
        \put (95, 48) {\color{red} \(v_3\)}

        \put (16, 35) {\color{darkgreen} \(\theta_1\)}
        \put (27, 20) {\color{darkgreen} \(\abs{\theta_1}\)}
        \put (50, 60) {\color{darkgreen} \(\theta_2\)}
        \put (60, 35) {\color{darkgreen} \(\abs{\theta_2}\)}
    \end{overpic}
    \caption{Illustration of Theorem \ref{lem:schur}: when the angles are large enough, the points \(v_0\) and \(v_3\) are closer from one another than the points \(u_0\) and \(u_3\).}
\end{figure}

The angle \(\alpha_\ell\) corresponds to the angle such that the points defining \(\widehat{\gamma}(w_0, \pi-\alpha_\ell, \ell)\) all lie on the same horocycle, and, as such, separates the case where choosing paths at distance \(\ell\) from one another will create a self-intersecting path and the case where it will not.
In the case where \(\ell=\ell_{q, p}\), we have
\begin{equation}
    \label{eq:alphaqp_inequality}
    \alpha_{\ell_{q, p}}
    =
    2
    \arctan
    \left(
        \sqrt{
            \frac{
                \cos^2(\pi/q)
            }{
                \sin^2(\pi/p)
            }
            -
            1
        }
    \right)
    >
    \frac{
        2 \pi
    }{
        p
    },
\end{equation}
as soon as \(\tan(\pi/p) < \cos(\pi/q)\), see Figure \ref{fig:validity_domain} to see the points where this inequality is verified.
Compared to the condition \(1/p+1/q<1/2\), this condition changes the case when \(p\le 5\): for \(p\le4\), the condition cannot be fulfilled, and for \(p=5\), the condition becomes \(q\ge5\) rather than \(q\ge4\).

\begin{figure}[ht]
    \centering
    \begin{tikzpicture}[scale=0.5]
        \draw[very thin,color=gray] (-0.1,-0.1) grid (10.1,10.1);

        \draw[->] (-0.2,0) -- (10.2,0) node[right] {\(p\)};
        \draw[->] (0,-0.2) -- (0,10.2) node[above] {\(q\)};

        \draw (0,0) node[below left]{0};
        \foreach \l in {1,...,10}{
            \draw (0,\l) node[left]{\l};
            \draw (\l,0) node[below]{\l};
        }

        \foreach \p in {3,...,10}{
            \foreach \q in {3,...,10}{
                \pgfmathparse{tan(180/\p)<cos(180/\q)?1:0}
                \ifnum\pgfmathresult>0 
                    \filldraw[blue] (\p,\q) circle (5pt);
                \fi

                \pgfmathparse{int((\p-2)*(\q-2))}
                \ifnum\pgfmathresult>4
                    \filldraw[red] (\p,\q) circle (2.5pt);
                \fi
            }
        }

        \draw[color=red,domain=2.5:10,smooth] plot (\x,{2*\x/(\x-2)});
        \draw[color=blue,domain=4.132:10,smooth] plot (\x,{180/acos(tan(180/\x))});
    \end{tikzpicture}
    \caption{The red points verify \(1/p+1/q<1/2\), the blue ones verify \(\tan(\pi/p)<\cos(\pi/q)\).}
    \label{fig:validity_domain}
\end{figure}

\section{Proofs}

Let us set for this section \(p \ge{5}\) and \(q\ge{3}\) such that \(1/p + 1/q < 1/2\).
Let us set \(a_p\) by
\begin{equation}
a_p
=
\begin{cases}
    \frac{\pi}{p} & \text{, if \(p\) is odd;} \\
    \frac{2\pi}{p} & \text{, if \(p\) is even.}
\end{cases}
\end{equation}
These angles are represented on Figure \ref{fig:a_p}.
We consider the zigzag paths \(\widehat{\gamma}\) such that two consecutive geodesic parts of \(\widehat{\gamma}\) have angles \(\pm (\pi-a_p)\) between them.
Moreover, for such a path, when we denote \(\displaystyle\widehat\gamma = \bigcup_{i\in\mathbb{Z}} [u_i, u_{i+1}]\), then we take the sequence \((u_i)_{i\in\mathbb{Z}}\) defining the zigzag paths in paragraph \ref{par:hyp_til}.
Let us denote by \(\widehat{\Gamma}_\mathrm{red}(p,q)\subset\widehat{\Gamma}(p,q)\) the set of all these zigzag paths \ie
\begin{equation*}
    \widehat{\Gamma}_\mathrm{red}(p,q)
    =
    \colonSet{
        \widehat{\gamma}(u_0,\theta,\ell_{q,p})
        =
        \bigcup_{i\in\mathbb{Z}}
        [u_i, u_{i+1}]
    }{
        \forall i \in \mathbb{Z},
        \,
        \theta_i \in \left\{\pm A_p\right\}
        \text{ and } u_i
        \text{ is the center of a face of \(\Lpq\)}   
    }.
\end{equation*}

\begin{figure}[ht]
    {
        \hfill
        \begin{subfigure}[l]{5cm}
            \centering
            \begin{overpic}[
                width=5cm
            ]{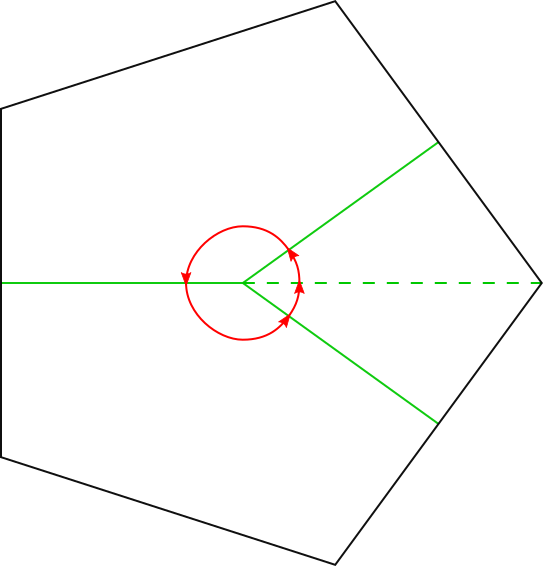}
            \put (28, 60) {\color{red} \(A_5\)}
            \put (28, 37) {\color{red} \(A_5\)}
            \put (55, 44) {\color{red} \(a_5\)}
            \put (55, 54) {\color{red} \(a_5\)}
            \end{overpic}
            \subcaption{\(p=5\)}
        \end{subfigure}
        \hfill
        \begin{subfigure}[r]{5cm}
            \centering
            \begin{overpic}[
                width=5cm
            ]{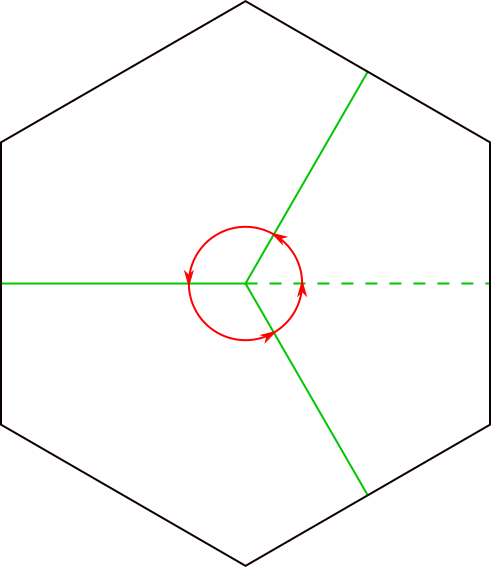}
            \put (28, 60) {\color{red} \(A_6\)}
            \put (28, 37) {\color{red} \(A_6\)}
            \put (55, 44) {\color{red} \(a_6\)}
            \put (55, 54) {\color{red} \(a_6\)}
            \end{overpic}
            \subcaption{\(p=6\)}
        \end{subfigure}
        \hfill
    }
    \caption{Motivation behind the choice of the angles: we want to make sure to extract a sub-tree of the dual lattice that goes as forward as possible.}
    \label{fig:a_p}
\end{figure}

\paragraph*{Organisation of the proofs.}

In order to prove Theorem \ref{thm.principal}, we prove in Subsection \ref{ssec:cons_q_geod} that the previous \(\widehat{\gamma}\) are quasi-geodesics, then use the Morse--Mostow lemma in Subsection \ref{ssec:haus_dist_esti} to have a bound on the distance between \(\widehat{\gamma}\) and the geodesic \(\gamma\) at finite distance from it.
Finally, in Subsection \ref{ssec:proof_main_theorem} we find conditions on \(p\) and \(q\) such that this distance is small enough.

\subsection{Constants of quasi-geodicity}
\label{ssec:cons_q_geod}

The goal of this subsection is to show the following result.

\begin{proposition}
[Constants of quasi-geodicity]
\label{prop:qgdq_constants}
    Let us suppose that \(\tan(\pi/p) < \cos(\pi/q)\).
    There exist two constants \(k_{q,p} \ge 1\) and \(c_{q,p} \ge 0\) such that for all \(\displaystyle\widehat\gamma = \bigcup_{i\in\mathbb{Z}} [u_i, u_{i+1}] \in \widehat{\Gamma}_\mathrm{red}(p,q)\) and \(f : \mathbb{R} \to \Hyp{2}\) the parametrisation of \(\widehat\gamma\) such that for all \(i \in \mathbb Z\), \(f(i\ell_{q, p}) = u_i\) and \(f\) is parametrised by arc length on \([i\ell_{q, p}, (i+1)\ell_{q, p}]\), we have
    \begin{equation}
    \label{eq:qgdq_constants}
        \forall s, t\in\mathbb R,
        \quad
        \frac{\abs{t - s}}{k_{q,p}} - c_{q,p}
        \le
        \hyperbolicDistance{f(s)}{f(t)}
        \le
        \abs{t - s}
    \end{equation}
    In particular, \(f\) is a complete \((k_{q,p}, c_{q,p})\)-quasi-geodesic.
\end{proposition}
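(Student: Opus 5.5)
The upper bound is immediate. By construction \(f\) is piecewise parametrised by arc length, so the hyperbolic length of \(f([s,t])\) equals \(\abs{t-s}\). The distance between two points is at most the length of any curve joining them, hence \(\hyperbolicDistance{f(s)}{f(t)} \le \abs{t-s}\).

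For the lower bound I will first work at the vertices \(u_i\) and then interpolate. The key step is to compare the path \((u_i,\dots,u_j)\) with a symmetric model path using Theorem~\ref{lem:schur}.

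\begin{itemize}
    \item The model is the path \((v_i,\dots,v_j)\) with steps \(\ell_{q,p}\) and all turning angles equal to \(+A_p\), where \(A_p = \pi - a_p\). Its points lie on a hypercycle or circle, and it is this choice of angle that will be checked.
    \item Theorem~\ref{lem:schur} applies provided \(A_p \ge \pi - \alpha_{\ell_{q,p}}\), i.e.\ \(a_p \le \alpha_{\ell_{q,p}}\).
    \item Since \(a_p \le 2\pi/p\), this follows from~\eqref{eq:alphaqp_inequality} under the hypothesis \(\tan(\pi/p) < \cos(\pi/q)\). The inequality is even strict.
    \item The theorem then gives \(\hyperbolicDistance{u_i}{u_j} \ge \hyperbolicDistance{v_i}{v_j}\), since the model has angles \(\abs{\theta_k} = A_p\).
\end{itemize}

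It remains to estimate \(\hyperbolicDistance{v_0}{v_n}\) for the constant-turning path. Because \(a_p < \alpha_{\ell_{q,p}}\) strictly, the points \(v_k\) are not on a horocycle (or a circle) but on a hypercycle at some distance \(h\) from a geodesic \(\sigma\). Let \(d\) be the length of the projection onto \(\sigma\) of one step \([v_k,v_{k+1}]\). This \(d>0\) is computable by hyperbolic trigonometry in the Saccheri quadrilateral formed by \(v_k\), \(v_{k+1}\) and their projections. The model path is then equivariant under the hyperbolic translation of length \(d\) along \(\sigma\), which gives
\[
\hyperbolicDistance{v_0}{v_n} \ge nd - 2h,
\]
since projection onto \(\sigma\) is 1-Lipschitz and \(v_0,v_n\) are each within \(h\) of \(\sigma\). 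This yields \(\hyperbolicDistance{u_i}{u_j} \ge \abs{j-i}\,d - 2h\).

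To pass to general \(s,t\), let \(i,j\) be the indices of the nearest vertices, so that \(\hyperbolicDistance{f(s)}{u_i} \le \ell_{q,p}\) and similarly for \(t\). With \(\abs{t-s} \le (\abs{j-i}+2)\ell_{q,p}\), the triangle inequality gives
\[
\hyperbolicDistance{f(s)}{f(t)} \ge \frac{d}{\ell_{q,p}}\abs{t-s} - 2h - 2d - 2\ell_{q,p}.
\]
So we may take \(k_{q,p} = \ell_{q,p}/d\), which is at least \(1\) because \(d \le \ell_{q,p}\), and \(c_{q,p} = 2h + 2d + 2\ell_{q,p}\).

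The main obstacle is the second step: checking that the constant-angle path really is the correct comparison path for Theorem~\ref{lem:schur}, and getting the hypercycle estimate with explicit \(d\) and \(h\). One must rule out the alternating-sign configuration being worse. Theorem~\ref{lem:schur} handles this, since it compares any sign pattern to the all-same-sign one. One must also ensure strictness \(a_p < \alpha_{\ell_{q,p}}\), so that \(d > 0\); if equality held, the model path would be a horocycle and we would only get logarithmic growth. For the explicit values, I would place \(\sigma\) as a diameter of \(\mathbb{D}\) and use the quadrilateral relations \(\sinh(\ell_{q,p}/2) = \cosh h \,\sinh(d/2)\), together with the relation between the turning angle and \(h\), to express \(d\) and \(h\) in terms of \(p\) and \(q\).
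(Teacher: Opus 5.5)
Your proposal is correct and follows the paper's proof: the same arc-length/triangle-inequality upper bound, the same reduction to the vertices \(u_i\), and the same comparison through Theorem~\ref{lem:schur} with the constant-angle path \((v_k)\), which is admissible because \(a_p\le 2\pi/p<\alpha_{\ell_{q,p}}\). The only difference is how you bound \(\hyperbolicDistance{v_0}{v_n}\): the paper diagonalises the M\"obius matrix \(M_{q,p}\) of \(v_k\mapsto v_{k+1}\) to get \(\hyperbolicDistance{v_0}{v_n}\ge n\log\rho\), where \(\log\rho\) is exactly your translation length \(d\) (so both give \(k_{q,p}=\ell_{q,p}/d\)), whereas you argue synthetically via the invariant axis; note that the \(1\)-Lipschitz projection alone already gives \(\hyperbolicDistance{v_0}{v_n}\ge nd\), so your \(-2h\) term, and hence the \(2h\) in \(c_{q,p}\), can be dropped.
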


\begin{proof}
Let \(s,\, t \in \mathbb{R}\).
Let us suppose, without loss of generality, that \(s \le t\) and let us take \(i,\,j\in\mathbb Z\) such that \(i\ell_{q,p}\le s\le(i+1)\ell_{q,p}\) and \((j-1)\ell_{q,p}\le t\le j\ell_{q,p}\).

\begin{figure}[ht]
    \centering
    \begin{tikzpicture}
        \coordinate (s) at (.5,.5);
        \coordinate (t) at (8.5,.5);

        \draw [dashed] (0,1)--(s);
        \draw (s)--(1,0)--(2,1)--(3,0)--(4,1);
        \draw [dashed] (4,1)--(5,0);
        \draw (5,0)--(6,1)--(7,0)--(8,1)--(t);
        \draw [dashed] (t)--(9,0);

        \draw [
            decorate,
            decoration={amplitude=5pt,brace,mirror},
            red
        ] (0,1)--(.5,.5) node [anchor=north east,red]{\(\ell_{q,p}\)}--(1,0);
        \draw [
            decorate,
            decoration={amplitude=5pt,brace},
            red
        ] (8,1)--(8.5,.5) node [anchor=south west,red]{\(\ell_{q,p}\)}--(9,0);

        \filldraw [black] (0,1) circle (1pt) node[anchor=south]{\(i \ell_{q,p}\)};
        \filldraw [black] (s) circle (1pt) node[anchor=south west]{\(s\)};
        \filldraw [black] (1,0) circle (1pt) node[anchor=north]{\((i+1) \ell_{q,p}\)};
        \filldraw [black] (8,1) circle (1pt) node[anchor=south]{\((j-1) \ell_{q,p}\)};
        \filldraw [black] (t) circle (1pt) node[anchor=north east]{\(t\)};
        \filldraw [black] (9,0) circle (1pt) node[anchor=north]{\(j \ell_{q,p}\)};
    \end{tikzpicture}
\end{figure}

\paragraph*{Upper bound.}

The upper bound in \eqref{eq:qgdq_constants} is due to the triangular inequality.
Indeed, we have
\begin{align}
    \hyperbolicDistance{f(s)}{f(t)}
    &
    \le
    \hyperbolicDistance{f(s)}{u_{i+1}}
    +
    \hyperbolicDistance{u_{i+1}}{u_{j-1}}
    +
    \hyperbolicDistance{u_{j-1}}{f(t)}
    \\
    &
    \le
    [(i+1) \ell_{q,p} - s]
    +
    [(j - i - 2) \ell_{q,p}]
    +
    [t - (j-1) \ell_{q,p}]
    \\
    &
    \le
    \abs{t - s}.
\end{align}

\paragraph*{Lower bound.}

Using the reverse triangular inequality and because the distances between the points \(u_i\) and \(f_{s}\) and the points \(u_j\) and \(f_t\) is less than \(\ell_{q, p}\), we can observe that, if \(\tan(\pi/p)<\cos(\pi/q)\), then
\begin{equation}
\label{eq:low_bou_1}
    \hyperbolicDistance{f(s)}{f(t)}
    \ge
    \hyperbolicDistance{u_i}{u_j}
    -
    2 \ell_{q, p}
    \ge
    \hyperbolicDistance{v_i}{v_{j}}
    -
    2 \ell_{q,p}
    =
    \hyperbolicDistance{v_0}{v_{j-i}}
    -
    2 \ell_{q,p},
\end{equation}
where the second inequality is a consequence of the discrete hyperbolic Schur theorem \ref{lem:schur}, whose hypotheses are verified using \eqref{eq:alphaqp_inequality}, for the sequence \((v_i)_{i\in\mathbb{Z}}\) of points of \(\Hyp{2}\) such that for all \(i \in \mathbb{Z}\), \(\hyperbolicDistance{v_i}{v_{i+1}} = \ell_{q,p}\) and \(\angle(v_{i-1},v_i,v_{i+1})=A_p\) with \(v_0\in\mathbb{R}_-\) and \(v_1=0\).

Let us denote \(r_{q,p} = \tanh(\ell_{q, p} / 2)\) the Euclidean distance between \(0\) and a point at distance \(\ell_{q, p}\) from \(0\).
Let us consider the isometry \(\phi_{q, p}\) of \(\Hyp{2}\) (acting on \(\mathbb{D}\)) such that \(\phi_{q, p}(-r_{q,p})=0\) and \(\phi_{q, p}(0)=r_{q,p}e^{\bfi a_p}\) (where \(\bfi\) is the unit complex of argument \(\displaystyle \frac{\pi}{2}\)).
In that case, for all \(i\in\mathbb{Z}\), \(v_i = \phi_{q, p}^i(v_0) = \phi_{q, p}^{i-1}(v_1)\) and \(\phi_{q, p}\) is represented by the matrix
\begin{equation}
    M_{q, p} =
    \begin{pmatrix}
        \mathrm{e}^{\bfi a_p / 2} & r_{q,p} \mathrm{e}^{\bfi a_p / 2} \\
        r_{q,p} \mathrm{e}^{-\bfi a_p / 2} & \mathrm{e}^{-\bfi a_p / 2}
    \end{pmatrix}.
\end{equation}
The eigenvalues \(\lambda_{\pm}\) and eigenvectors \(v_{\pm}\) of \(M_{q, p}\) are
\begin{equation}
    \lambda_\pm
    =
    \cos(a_p/2)
    \pm
    \delta
    \in
    \mathbb{R}
    \quad
    \text{and}
    \quad
    v_\pm
    =
    \begin{pmatrix}
        x_\pm\\
        1
    \end{pmatrix} \; ,
\end{equation}
where
\(
    \displaystyle
    x_{\pm}
    =
    \mathrm{e}^{\bfi a_p/2}
    \frac{
        \bfi \sin(a_p/2)
        \pm
        \delta
    }{
        {r_{q,p}}
    }
\)
and
\(\delta^2 = r_{q,p}^2 - \sin^2(a_p/2) > 0\) as soon as \(\tan(\pi/p) < \cos(\pi/q)\).
Hence, by working in the basis \(\{v_+,v_- \}\), we obtain for \(n\in\mathbb N\),  
\begin{equation}
v_{n+1} = \phi_{q, p}^n(v_1) = x_+ \frac{\rho^n - 1}{\rho^n - \xi},
\end{equation}
where
\(\displaystyle \rho = \frac{\lambda_+}{\lambda_-}\)
and
\(\displaystyle \xi = \frac{x_+}{x_-}\).

By applying the function \(2 \artanh\) to the inequality obtained in the following equation
\begin{equation}
    \abs{v_{n+1}}
    =
    \frac{
        \rho^n - 1
    }{
        \abs{\rho^n - \xi}
    }
    =
    \frac{
        \rho^n - 1
    }{
        \sqrt{
            \left(\rho^n - \rePart{\xi}\right)^2 + \imPart{\xi}^2
        }
    }
    =
    \frac{
        \rho^n - 1
    }{
        \sqrt{
            \rho^{2n} - 2 \rePart{\xi} \rho^n + 1
        }
    }
    \ge
    \frac{
        \rho^n - 1
    }{
        \rho^n + 1
    },
\end{equation}
we have
\begin{equation}
\label{eq:low_bou_2}
    \hyperbolicDistance{v_0}{v_n}
    =
    \hyperbolicDistance{v_1}{v_{n+1}}
    =
    2 \artanh \abs{v_{n+1}}
    \ge
    n \log \rho.
\end{equation}

If we set \(\displaystyle k_{q,p} = \frac{\ell_{q, p}}{\log\rho} > 0\), we obtain using \eqref{eq:low_bou_1} and \eqref{eq:low_bou_2},
\begin{equation}
    \hyperbolicDistance{f(s)}{f(t)}
    \ge
    \frac{j-i}{k_{q,p}} \ell_{q,p}
    -
    2 \ell_{q,p}
    \ge
    \frac{\abs{t-s}}{k_{q,p}}
    -
    2 \ell_{q,p}.
\end{equation}
We conclude by observing that
\begin{equation}
    k_{q,p}
    =
    \frac{
        \arcosh\left(
            \frac{
                \cos(\pi/q)
            }{
                \sin(\pi/p)
            }
        \right)
    }{
        \arcosh\left(
            \frac{
                \cos(\pi/q)
                \cos(a_p/2)
            }{
                \sin(\pi/p)
            }
        \right)
    }
    \ge
    \frac{
        \arcosh\left(
            \frac{
                \cos(\pi/q)
            }{
                \sin(\pi/p)
            }
        \right)
    }{
        \arcosh\left(
            \frac{
                \cos(\pi/q)
            }{
                \tan(\pi/p)
            }
        \right)
    }
    \ge
    1
\end{equation}
and by setting \(c_{q,p} = 2 \ell_{q, p} \ge 0\).
\end{proof}

\begin{remark}
The proof can be extended to any fixed length \(\ell\) and any angle \(\theta\) with \(\tanh(\ell/2) > \sin(\theta/2)\).
\end{remark}

\subsection{Hausdorff distance estimates}
\label{ssec:haus_dist_esti}

\begin{lemma}
[Uncountably many geodesics are close enough from zigzag paths]
\label{lem:good_geod}
    For all \(p\ge5\), there exists \(q_p\ge5\) such that for all \(q\ge q_p\), there exists uncountably many \(\gamma\in\Gamma\) such that
    \begin{equation}
    \label{eq:proximity}
        \hausdorffDistance{\gamma}{\widehat{\gamma}}
        <
        \ell_{p,q} / 2
        \quad
        \text{for some \(\widehat\gamma\in\widehat\Gamma_r(p,q)\)}
        .
    \end{equation}
\end{lemma}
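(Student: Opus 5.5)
Fix a reduced zigzag path \(\widehat\gamma\in\widehat\Gamma_\mathrm{red}(p,q)\) with arc-length parametrisation \(f\). By Proposition \ref{prop:qgdq_constants}, \(f\) is a complete \((k_{q,p},c_{q,p})\)-quasi-geodesic with \(c_{q,p}=2\ell_{q,p}\). Its restrictions \(f|_{[-n\ell_{q,p},n\ell_{q,p}]}\) are quasi-geodesic segments with the same constants. The Morse--Mostow lemma \ref{lem:Morse--Mostow}, with \(\delta=\log(1+\sqrt2)\), then gives
\[
\hausdorffDistance{f([-n\ell_{q,p},n\ell_{q,p}])}{[u_{-n},u_n]}\le C k_{q,p}^2\,(2\ell_{q,p}+\delta)
\]
uniformly in \(n\).

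Next I pass to the limit \(n\to\infty\). Since \(\hyperbolicDistance{u_0}{u_{\pm n}}\ge n\log\rho-O(1)\to\infty\) by \eqref{eq:low_bou_2}, the points \(u_{\pm n}\) converge to two distinct boundary points \(\xi_\pm\in\partial\mathbb{D}\). They are distinct because the two halves of \(\widehat\gamma\) stay within bounded distance of geodesic rays leaving \(u_0\) in different directions. The segments \([u_{-n},u_n]\) then converge, uniformly on compacts, to the complete geodesic \(\gamma\) with endpoints \(\xi_\pm\). The uniform bound passes to the limit, giving \(\hausdorffDistance{\gamma}{\widehat\gamma}\le D_{q,p}:=Ck_{q,p}^2(2\ell_{q,p}+\delta)\).

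The main obstacle, which I expect to dominate the proof, is forcing \(D_{q,p}<\ell_{p,q}/2\) by taking \(q\) large with \(p\) fixed. As \(q\to\infty\) we have
\[
\ell_{p,q}=2\arcosh\bigl(\cos(\pi/p)/\sin(\pi/q)\bigr)\sim 2\log q\to\infty,
\]
whereas \(\ell_{q,p}=2\arcosh\bigl(\cos(\pi/q)/\sin(\pi/p)\bigr)\) tends to the finite value \(2\arcosh(1/\sin(\pi/p))\). The ratio \(k_{q,p}\) also converges to a finite limit. Here I use that \(\cos(\pi/q)\to1>\tan(\pi/p)\) for \(p\ge5\), so the denominator of \(k_{q,p}\) stays bounded away from \(0\). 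Hence \(D_{q,p}\) converges to a finite constant depending only on \(p\) (and on the universal \(C\), e.g.\ \(C=92\)), while \(\ell_{p,q}/2\to\infty\). This yields a \(q_p\) beyond which \(D_{q,p}<\ell_{p,q}/2\). I will take \(q_p\ge5\) and large enough that \(\tan(\pi/p)<\cos(\pi/q)\) holds as well. The delicate points are that all limits are monotone or at least eventually controlled, and that the hypotheses of Theorem \ref{lem:schur} used inside Proposition \ref{prop:qgdq_constants} remain valid throughout; I will check these explicitly.

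Finally, for uncountability, the reduced zigzag paths through a fixed starting face are encoded by sign sequences \((\theta_i)_{i\in\mathbb{Z}}\in\{\pm A_p\}^{\mathbb{Z}}\), which form an uncountable family. Conversely, any geodesic \(\gamma\) is within bounded Hausdorff distance of only countably many such \(\widehat\gamma\). This holds because \(\widehat\gamma\) lies in a \(D_{q,p}\)-neighbourhood of \(\gamma\), hence passes through one of finitely many faces near any given point of \(\gamma\). Since \(\gamma\) and these faces are fixed, the portion of \(\widehat\gamma\) inside the neighbourhood is determined by finitely many choices at each scale, which I will make precise. An easier route is to note that two sign sequences differing at some index produce paths separated by a turn, so they have different endpoints \(\xi_\pm\). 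Then the map \(\widehat\gamma\mapsto\gamma\) is injective on a subfamily of sequences, for instance those that are constant outside a finite window pattern, or more simply all sequences, which suffices once injectivity is established. This gives uncountably many \(\gamma\in\Gamma\) satisfying \eqref{eq:proximity}.
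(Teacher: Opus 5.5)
Your first three steps match the paper: quasi-geodesicity from Proposition~\ref{prop:qgdq_constants}, Morse--Mostow, and the observation that \(D_{q,p}=Ck_{q,p}^2(2\ell_{q,p}+\delta)\) stays bounded while \(\ell_{p,q}/2\to\infty\). Your passage from the segments \([u_{-n},u_n]\) to a complete geodesic is more careful than the paper's. Distinctness of \(\xi_\pm\) is better justified by noting that every \([u_{-n},u_n]\) passes within \(D_{q,p}\) of \(u_0\).

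The genuine gap is the last step. You never prove that distinct reduced zigzag paths give distinct geodesics, and that is the whole content of ``uncountably many''. Your first route fails as stated. Finitely many choices at each of infinitely many scales gives a compact family, not a countable one: in the ladder \(\mathbb{Z}\times\{0,1\}\), a bounded neighbourhood of a line already contains uncountably many paths. Your ``easier route'' simply asserts the crux. Two sequences differing at index \(i\) give branches leaving \(u_i\) at angle only \(2a_p\). Nothing you have established prevents the two forward rays from fellow-travelling within \(2D_{q,p}\) and converging to the same point of \(\partial\mathbb{D}\), and quasi-geodesicity plus Morse--Mostow are fully compatible with that. Also, sequences constant outside a finite window form a countable set, so injectivity on that subfamily would give nothing.

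The missing idea, and the way the paper closes this step, is to use the bound \(<\ell_{p,q}/2\) for injectivity as well as existence. The paper argues that a face visited by one zigzag path and not the other cannot be crossed by both geodesics. A precise version goes through the dual lattice:
\begin{itemize}
\item Suppose \(\widehat\gamma_1\neq\widehat\gamma_2\) in \(\widehat\Gamma_\mathrm{red}(p,q)\) share the geodesic \(\gamma\). Then both lie in the closed \(D_{q,p}\)-neighbourhood \(N\) of \(\gamma\).
\item The two components of \(\mathbb{D}\setminus N\) are connected and miss both paths. So at most two components of \(\mathbb{D}\setminus(\widehat\gamma_1\cup\widehat\gamma_2)\) leave \(N\).
\item The paths are distinct simple curves with the same ends, so there is a third component, and it lies in \(N\).
\item That component is a union of faces of the dual lattice \(\mathcal{L}_{q,p}\). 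Each such face is a regular \(q\)-gon centred at a vertex of \(\Lpq\) with inradius \(\ell_{p,q}/2\), so it contains a disk of radius \(\ell_{p,q}/2>D_{q,p}\).
\item No such disk fits in \(N\): push from its centre along the perpendicular to \(\gamma\). This is a contradiction.
\end{itemize}
Hence \(\widehat\gamma\mapsto\gamma\) is injective. The uncountably many sign sequences, with a fixed starting face and edge, give distinct simple paths and therefore uncountably many \(\gamma\).

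Note which faces the bound actually controls. The dual faces have inradius \(\ell_{p,q}/2\), which blows up as \(q\to\infty\). The faces of \(\Lpq\) have inradius \(\ell_{q,p}/2\), which stays bounded, so the bound \(D_{q,p}<\ell_{p,q}/2\) says nothing directly about them.
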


\begin{proof}
Let \(p \ge 5\), and let \(q\ge3\) such that \(\tan(\pi/p)>\cos(\pi/q)\).
Let us take \(\displaystyle\widehat{\gamma}=\bigcup_{i\in\mathbb{Z}} [u_i, u_{i+1}]\in\widehat{\Gamma}_\mathrm{red}(p,q)\).

% \paragraph*{Ends of zigzag path.}

Thanks to Proposition \ref{prop:qgdq_constants}, we know that \(\widehat{\gamma}\) is a complete quasi-geodesic, and thus there exists \(\gamma\) a complete geodesic such that \(\hausdorffDistance{\gamma}{\widehat{\gamma}} < \infty\).
Let us denote by \(\vartheta_1\) and \(\vartheta_2\) both ends of \(\gamma\) in \(\partial\Hyp{2}\), \ie~\(\gamma=[\vartheta_1,\vartheta_2]\).

% \paragraph*{Bound on \(q\).}

Thanks to Proposition \ref{prop:qgdq_constants}, we know that there exists \(f:\mathbb{R}\to\Hyp{2}\) a complete \((k_{q,p}, c_{q,p})\)-quasi-geodesic such that \(f(\mathbb{R}) = \widehat{\gamma}\) with
\begin{equation}
k_{q,p}
=
\frac{
    \arcosh\left(
        \frac{
            \cos(\pi/q)
        }{
            \sin(\pi/p)
        }
    \right)
}{
    \arcosh\left(
        \frac{
            \cos(\pi/q)
            \cos(a_p/2)
        }{
            \sin(\pi/p)
        }
    \right)
}
=
\Theta_{q}(1)
\quad
\text{and}
\quad
c_{q,p}
=
4
\arcosh\left(
    \frac{
        \cos(\pi/q)
    }{
        \sin(\pi/p)
    }
\right)
=
\Theta_{q}(1).
\end{equation}
Using the Morse--Mostow lemma \ref{lem:Morse--Mostow}, we obtain that
\begin{equation}
\hausdorffDistance{\widehat{\gamma}}{[\vartheta_1, \vartheta_2]}
\le
C k_{q,p}^2 (c_{q,p} + \delta)
=
O_q(1).
\end{equation}
Moreover,
\begin{equation}
\frac{
    \ell_{p, q}
}{
    2
}
=
\arcosh\left(
    \frac{
        \cos(\pi/p)
    }{
        \sin(\pi/q)
    }
\right)
\xrightarrow[q \to \infty]{}
\infty,
\end{equation}
as such there exists \(q_p\in\mathbb{N}\) such that for \(q \ge{q_p}\), \eqref{eq:proximity} holds.

% \paragraph*{Uncountably many geodesics.}

For the rest of the proof, let us suppose that \(q\ge q_p\).
As there exists uncountably many \(\widehat\gamma\), we only have to prove that all \(\gamma\) at finite distance from a \(\widehat\gamma\) are different.
Let us take \(\widehat\gamma_1\) and \(\widehat\gamma_2\) two distinct elements of \(\widehat\Gamma_r(p,q)\).
Let us denote by \(\gamma_1\) (\resp~\(\gamma_2\)) the geodesic of \(\Hyp{2}\) at distance smaller than \(\ell_{p,q}/2\) from \(\widehat\gamma_1\) (\resp~\(\widehat\gamma_2\)).
As \(\widehat\gamma_1\neq\widehat\gamma_2\), there exists a face of the lattice which is crossed by only one of both zigzag paths.
As for all \(i\in\left\{1,2\right\}\), \(\hausdorffDistance{\widehat\gamma_i}{\gamma_i}<\ell_{p,q}/2\), only one of both geodesics crosses that face.
Hence, \(\gamma_1 \neq \gamma_2\).
\end{proof}

\subsection{Proof of theorem \ref{thm.principal}}
\label{ssec:proof_main_theorem}

Let \(p\ge5\) and \(q\ge q_p\).

% \paragraph*{Well definition of the boundary condition of the zigzag states.}

Let us take \(\displaystyle\widehat{\gamma}=\bigcup_{i\in\mathbb{Z}} [u_i, u_{i+1}]\in\widehat{\Gamma}_\mathrm{red}(p,q)\).
For \(i\in\mathbb{Z}\), there exists exactly two horocycles going through \(u_i\) and \(u_{i+1}\).
Let us denote by \(H_i\) the union of both open disks described by the horocycles and by \(U^-_i\) (\resp~\(U^+_i\)) the connected component of \(\mathbb{D} \setminus H_i\) that contains \(u_i\) (\resp~\(u_{i+1}\)).
In particular, for all \(i\in\mathbb{Z}\), \([u_i,u_{i+1}] \subset \overline{H_i}\).
Using the Lemma of Subsection 2.5.A of \cite{FG97}, for all \(i\in\mathbb{Z}\), \(U^-_{i-1} \subset U^-_i\) and \(U^+_{i+1} \subset U^+_i\).
As such we conclude that \(\widehat{\gamma}\) is a simple curve of infinite length.
We conclude that \(\Hyp{2}\setminus\widehat\gamma\) has two connected components and splits \(V_{p,q}\) in two sets of connected vertices.
As such, the configuration \(\eta^{\widehat{\gamma}}\) are well-defined.

% \paragraph*{Uncountably many \emph{extremal} Series--Sinai states.}

Thanks to Lemma \ref{lem:good_geod}, we know that for \(q \ge q_p\), there exists uncountably many \(\gamma\in\Gamma\) such that \(\hausdorffDistance{\gamma}{\widehat\gamma} < \ell_{p,q}/2\) for some \(\widehat\Gamma_r(p,q)\).
Then thanks to Lemma \ref{lem:link_SS_DACLN}, for \(\mu^{\gamma}_{\beta}\) a weak limit of finite volume states with boundary condition \(\eta^\gamma\),
\(
    \mu_\beta^\gamma
    \in
    \exG_\beta
\)
at low enough temperature.

As the Series--Sinai states are mutually singular at low enough temperature, and as such pairwise distinct, we proved that there exist uncountably many \emph{extremal} Series--Sinai states for the ferromagnetic nearest-neighbour Ising model on \(\Lpq\) at low enough temperature.

\vfill

\paragraph*{Acknowledgements.}

We thank François Dahmani and Pierre Will for the enlightening discussions.
We thank Loren Coquille and Matteo D'Achille for their many helpful suggestions, proofreading and constructive feedback.
We are grateful to the Institut Élie Cartan de Lorraine for excellent working conditions in the occasion of an invitation (March 2026).
We acknowledge the support of ANR Coconut (\texttt{ANR-25-CE40-1147}).

\newpage

\printbibliography

\end{document}